\title[Stability of discrete-time Hawkes process with inhibition]{Stability of discrete-time Hawkes process with inhibition: towards a general condition}
\date{}
\author[M. Costa]{Manon Costa}
\address{Manon Costa\\ Institut de Math\'ematiques de Toulouse, CNRS UMR 5219 \\
	Universit\'e Toulouse III Paul Sabatier
	\\ 118 route
	de Narbonne, F-31062 Toulouse cedex 09.} \email{manon.costa@math.univ-toulouse.fr}
\author[P. Maillard]{Pascal Maillard}
\address{Pascal Maillard\\Institut Universitaire de France and Institut de Math\'ematiques de Toulouse, CNRS UMR 5219 \\
	Universit\'e Toulouse III Paul Sabatier
	\\ 118 route
	de Narbonne, F-31062 Toulouse cedex 09.} \email{pascal.maillard@math.univ-toulouse.fr}
\author[A. Muraro]{Anthony Muraro}
\address{Anthony Muraro\\ Institut de Math\'ematiques de Toulouse, CNRS UMR 5219 \\
	Universit\'e Toulouse III Paul Sabatier
	\\ 118 route
	de Narbonne, F-31062 Toulouse cedex 09.} \email{anthony.muraro@math.univ-toulouse.fr}
\numberwithin{equation}{section}
\newtheorem{thm}{Theorem}[section]
\newtheorem*{thm*}{Theorem}
\newtheorem{lemma}[thm]{Lemma}
\newtheorem{prop}[thm]{Proposition}
\newtheorem{conjecture}[thm]{Conjecture}
\theoremstyle{definition}
\newtheorem{definition}[thm]{Definition}
\theoremstyle{remark}
\newtheorem*{conjecture_rappel_b<1}{Conjecture \ref{conjecture_b<1}}
\newcommand{\R}{\mathbb{R}}
\newcommand{\N}{\mathbb{N}}
\renewcommand{\P}{\mathbb{P}}
\newcommand{\E}{\mathbb{E}}
\newcommand{\Disc}{\text{Disc}}
\newcommand*{\proba}[1]{\mathbb{P}\left( #1 \right)}
\begin{document}
\maketitle

\begin{center}
	\textsc{Universit\'e de Toulouse}
	\smallskip
\end{center}
\begin{abstract}
In this paper, we study a discrete-time analogue of a Hawkes process, modelled as a Poisson autoregressive process whose parameters depend on the past of the trajectory. The model is characterized to allow these parameters to take negative values, modelling inhibitory dynamics. More precisely, the model is the stochastic process $(\Tilde X_n)_{n\ge0}$ with parameters $a_1,\ldots,a_p \in \R$, $p\in\N$ and $\lambda > 0$, such that for all $n\ge p$, conditioned on $\Tilde X_0,\ldots,\Tilde X_{n-1}$, $\Tilde X_n$ is Poisson distributed with parameter 
\[
\left(a_1 \Tilde X_{n-1} + \cdots + a_p \Tilde X_{n-p} + \lambda \right)_+.
\]
This process can be seen as a discrete time Hawkes process with inhibition with a memory of length $p$. %This work is an extension of a prior work where we studied the specific case $p = 2$, for which we were able to classify the asymptotic behaviour of the process for the whole range of parameters, except for boundary cases.
We first provide a sufficient condition for stability in the general case which is the analog of a condition for continuous time Hawkes processes from \cite{costa_renewal_2020}. 
We then focus on the case $p=3$, extending the results derived for the $p=2$ case in a previous work \cite{Costa_Maillard_Muraro_2024}. In particular, we show that the process may be stable even if one of the coefficients $a_i$ is much greater than one.
\end{abstract}
\bigskip

\section{Introduction}

Hawkes processes are a class of point processes initially introduced by Hawkes  \cite{hawkes_spectra_1971,hawkes_cluster_1974}. These processes became well-known within the scientific community for their applications in modelling seismic events, specifically earthquakes. The fundamental aim in the formulation of this stochastic process was to formalize the concept of \textit{excitation} over time, which describes the phenomenon that the occurrence of an event increases the probability of another event, and so on. In a Hawkes process, the excitation's intensity depends on the entire history of the process, and influences its behaviour over time. An essential characteristic of the Hawkes process is its non-Markovian nature (in most cases), primarily because it depends on the complete past trajectory of the process.

In recent years, there has been a rise of interest in Hawkes processes (see for example \cite{bremaud_stability_1996,kirchner_perspectives_2017}), particularly in generalized versions that incorporate the notion of \textit{inhibition} \cite{costa_renewal_2020,raad_stability_2020,cattiaux_limit_2021}. Inhibition denotes the possibility of existence of events that decrease the probability of future events to occur. Mathematically, adding inhibition into the intensity causes new mathematical challenges. In particular obtaining necessary and sufficient conditions for the existence of stationary version remains an open question. To address this issue, we proposed in a previous paper \cite{Costa_Maillard_Muraro_2024} an analogous, discrete, and simplified process. This approach was devised to yield insights into the simplified process that could, in turn, offer intuition for potential generalizations or interpretations concerning the continuous Hawkes process. 

Our model is the following, for $p \in \N^*$ and initial condition $(\Tilde X_0, \dots, \Tilde X_{-p+1})$, the discrete-time Hawkes process with memory of length $p$  satisfies that for all $n\ge1,$
\begin{equation}
\label{defDiscreteHawkesP}
 \Tilde X_n \sim \mathcal{P}\left( \left( a_1\Tilde X_{n-1} + \cdots + a_p\Tilde X_{n-p} + \lambda \right)_+ \right)\qquad \text{conditionally on $X_{n-p}, \cdots, X_{n-1}$,}
\end{equation}
where $\mathcal{P(\rho)}$ denotes the Poisson distribution with parameter $\rho$. The parameters here $a_1,\ldots,a_p$ are real numbers, and $\lambda > 0$. The notation $(\cdot)_+$ refers to the ReLU function defined on $\R$ by :
$$(x)_+ := \max(0,x).$$ 

The process $\tilde{X}_n$ defined in \eqref{defDiscreteHawkesP} is called \textit{Poisson-autoregressive} processes, and has been deeply studied in the "linear case" that is, the same definition with every parameters being positive, so that the positive part $(\cdot)_+$ vanishes in \eqref{defDiscreteHawkesP} (for more details see \cite{ferland_integer-valued_2006,fokianos_interventions_2010}). Kirchner \cite{kirchner_perspectives_2017} proved the weak convergence of an infinite memory version ($p=+\infty$) to the continuous-time Hawkes process in the linear case. 
As far as we know, there are very few papers dealing with a non-linear version of such a process (see for example \cite{fokianos_log-linear_2011}). In \cite{Costa_Maillard_Muraro_2024}, we gave an almost complete characterization of stability of this process (excluding boundary cases) in the particular case $p=2$. 

In this paper, we  first investigate the  case of general $p$. We show that a sufficient condition for stability is that $(a_1)_+ + \cdots + (a_p)_+ < 1$. This result is analogous to a recent result for continuous time Hawkes processes from \cite{costa_renewal_2020}. We then focus on the case $p=3$. We provide a sufficient condition for stability of the process which can hold true even if one of the coefficients $a_i$ is much greater than one. This condition involves the existence of non-real eigenvalues (of arbitrary modulus) of the polynomial $$X^3 - a_1 X^{2} - a_{2}X - a_3.$$
This is to be compared with the classical condition for stability of solutions to the \textit{linear} recurrence equation $u_n = a_1 u_{n-1} + a_2 u_{n-2} + a_3 u_{n-3}$, which imposes that all eigenvalues of this polynomial be of modulus smaller than one. In particular, we show the existence of a wide range of parameters for which the discrete-time Hawkes process with inhibition is stable, but solutions to the linear recurrence equation grow exponentially.

We complement these results with two sets of sufficient conditions for instability of the discrete-time Hawkes process with inhibition as well as some numerical experiments in other ranges of parameters. Together, these results illustrate the complex effects of the non-linearity induced by the ReLU function and the negativity of the parameters.

\section{Main results}

\subsection{The general setting}
\label{subsec:setting}
Let us start by considering the general case, of a Poisson auto-regressive process with memory $p$ defined as in \eqref{defDiscreteHawkesP} :
for $p \in \N^*$ and initial condition $(\Tilde X_0, \dots, \Tilde X_{-p+1})$, the discrete-time Hawkes process with memory of length $p$ satisfies that conditionally on $X_{n-p}, \cdots, X_{n-1}$
\begin{equation*}
 \Tilde X_n \sim \mathcal{P}\left( \left( a_1\Tilde X_{n-1} + \cdots + a_p\Tilde X_{n-p} + \lambda \right)_+ \right)\qquad \forall n \geq 1,
\end{equation*}

The naturally associated Markov chain $(X_n)_{n \geq 0} \in \N^p$ writes 
\begin{equation}
\label{MarkovChainp}
\forall n \geq 0, ~  X_n := \left(\Tilde X_{n}, \Tilde X_{n-1}, \ldots,\Tilde X_{n-p+1}\right)  \in \N^p.
\end{equation}
More formally, for an initial condition $x = (x_1,\ldots,x_p)\in\N^p$ and parameters $a_1,\ldots,a_p \in \R,\lambda>0$, knowing $X_n = x$ the next step $X_{n+1}$ of the Markov chain will be :
$$X_{n+1} = (\ell, x_1,\ldots,x_{p-1}),$$
where $\ell$ is a realization of a Poisson random variable of parameter 
$$s_{x_1,\ldots,x_p}:= (a_1x_1+\cdots + a_px_p + \lambda)_+ \in [0,+\infty).$$
%Note that by convention, if $s_{ijk} = 0$ the next step of the Markov chain is $(0,i,j)$.\\
The transition matrix of the Markov chain $(X_n)_{n\ge0}$ is thus given for $(x_1,\ldots,x_p)\in\N^p$ and $\ell\in\N$ by:
\begin{equation}
\label{transitionP}
P\left( (x_1,\ldots,x_p), (\ell,x_1,\ldots,x_{p-1}) \right) =  \dfrac{e^{-s_{x_1,\ldots,x_p}}s_{x_1,\ldots,x_p}^\ell}{\ell !},
\end{equation}
and $P(x,y)= 0$ if $y$ is not of the above form.
In particular, the Markov chain is (weakly) irreducible in the sense of \cite{dmps_markov_chains}, since $(0,\ldots,0)$ is an accessible atom from any starting state (see Section~\ref{sec:Lyap} below). We say that it is \emph{geometrically ergodic,} if it admits a stationary distribution $\pi$ and if the law of $X_n$ converges exponentially fast in total variation to $\pi$.

\subsubsection*{Existing results for the linear case}

The case where all parameters $a_1,\ldots, a_p$ are nonnegative is well understood. The process $(\Tilde X_n)$ is an \textit{Integer-Valued GARCH process} (INGARCH($0,p$) process, see \cite{ferland_integer-valued_2006} and \cite{liu_systematic_2023}), also known in the literature as an \textit{Auto-regressive Conditional Poisson process} (ACP($p$) see \cite{heinen_modelling_2003}) and its long time behaviour depends on $\sum_{i=1}^p a_i$.
Namely, 
\begin{itemize}
\item If $\displaystyle \sum_{i=1}^p a_i < 1$, the process admits a stationary version. In fact, the Markov chain $(X_n)_{n\ge0}$ is geometrically ergodic. This result can be well understood when considering the branching interpretation of the INGARCH processes \cite{kirchner_perspectives_2017}, in which $\sum_{i=1}^p a_i$ is the mean number of offspring generated by a living individual.
\item If $\displaystyle \sum_{i=1}^p a_i > 1$, then $\Tilde X_n$ grows exponentially in $n$ almost surely. For completeness we will provide a simple proof of this general result, postponed to the Appendix \ref{app:Trans_p} at the end of this paper. 
\end{itemize}

\subsubsection*{A first result for the non-linear version}
The following result partially extends the first result mentioned above to the case of possibly non-negative parameters $a_1,\ldots,a_p$. It is a discrete analogue of a result in  \cite{costa_renewal_2020} for continuous-time Hawkes processes. 
\begin{thm}
If $(a_1)_+ + \dots + (a_p)_+ < 1$, then $(X_n)_{n\ge0}$ is a geometrically ergodic Markov chain.
\label{thm:recurrencePHDp}
\end{thm}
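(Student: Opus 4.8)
The plan is to use the standard Foster–Lyapunov drift criterion for geometric ergodicity of a weakly irreducible Markov chain with an accessible atom (here the state $(0,\ldots,0)$), as developed in \cite{dmps_markov_chains}. Concretely, I would exhibit a function $V \colon \N^p \to [1,\infty)$, a constant $\beta \in (0,1)$, and a finite constant $b$ such that
\[
PV(x) \le \beta V(x) + b \qquad \text{for all } x \in \N^p,
\]
which, together with the irreducibility already noted in the excerpt, yields geometric ergodicity. The natural candidate is a weighted exponential of a linear functional of the coordinates: set $V(x) = \exp\!\big(\theta \sum_{i=1}^p c_i x_i\big)$ for suitable positive weights $c_1,\ldots,c_p$ and a small parameter $\theta > 0$ to be tuned.

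The key computation is the following. Writing $s = s_{x_1,\ldots,x_p} = (a_1 x_1 + \cdots + a_p x_p + \lambda)_+$, the next state is $(\ell, x_1, \ldots, x_{p-1})$ with $\ell$ Poisson of mean $s$, so using the Poisson moment generating function $\E[e^{t\ell}] = \exp(s(e^t - 1))$ we get
\[
PV(x) = \exp\!\Big(\theta c_1 \big(e^{\theta c_1}-1\big)^{?}\Big)\cdots
\]
— more carefully, $PV(x) = \exp\!\big(s(e^{\theta c_1}-1)\big)\,\exp\!\big(\theta \sum_{i=1}^{p-1} c_{i+1} x_i\big)$. Since $s \le \sum_{i=1}^p (a_i)_+ x_i + \lambda$, we obtain
\[
PV(x) \le \exp\!\Big( \big(e^{\theta c_1}-1\big)\lambda \Big)\,\exp\!\Big( \sum_{i=1}^{p} \big[ (e^{\theta c_1}-1)(a_i)_+ + \theta c_{i+1}\mathds{1}_{i \le p-1} \big] x_i \Big).
\]
For this to be bounded by $\beta V(x) + b$ it suffices that for each coordinate $i$ the bracketed coefficient is at most $\theta c_i$ up to a uniform slack, i.e.
\[
(e^{\theta c_1}-1)(a_i)_+ + \theta c_{i+1} \le \theta c_i \quad (1 \le i \le p-1), \qquad (e^{\theta c_1}-1)(a_p)_+ \le \theta c_p,
\]
with at least one strict inequality giving the contraction. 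Choosing $c_i := \sum_{j=i}^p (a_j)_+ + \varepsilon(p - i + 1)$ for small $\varepsilon > 0$ (so $c_i - c_{i+1} = (a_i)_+ + \varepsilon$), these conditions reduce, as $\theta \to 0$, to $(a_i)_+ \le c_i - c_{i+1} = (a_i)_+ + \varepsilon$, which hold with room to spare; a first-order expansion $e^{\theta c_1} - 1 = \theta c_1 + O(\theta^2)$ then lets me pick $\theta$ small enough that every bracketed coefficient is $\le \theta c_i - \delta$ for some $\delta > 0$, and in particular $PV(x) \le e^{-\delta \theta}V(x) \cdot e^{(e^{\theta c_1}-1)\lambda}$; splitting off the bounded region where the geometric factor does not yet dominate gives the $+b$ term. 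Here the hypothesis $(a_1)_+ + \cdots + (a_p)_+ < 1$ enters precisely to ensure $c_1 < 1$ for $\varepsilon$ small, which is what makes the linearization $e^{\theta c_1} - 1 \approx \theta c_1$ compatible with the telescoping budget (one cannot absorb a coefficient exceeding $\theta c_1$ into the top coordinate).

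The main obstacle is the bookkeeping of the coupled system of inequalities across the $p$ coordinates and verifying that a single $\theta > 0$ works simultaneously for all of them while preserving a uniform contraction slack $\delta > 0$; the shift structure of the chain means the "mass" at coordinate $i$ is fed both by the Poisson increment (through $(a_i)_+$ and the factor $e^{\theta c_1}-1$) and by the deterministic shift from coordinate $i-1$, and one must check these balance. A secondary point is to confirm the Lyapunov function is bounded below by a positive constant (immediate, since $V \ge 1$) and that the drift inequality on the complement of a finite (hence "small" / petite) set, combined with the accessible atom $(0,\ldots,0)$, is exactly the hypothesis of the geometric ergodicity theorem invoked from \cite{dmps_markov_chains}; this last step is routine once the drift inequality is in hand.
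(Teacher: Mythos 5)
Your proposal is correct, and it follows the same overall strategy as the paper (Foster--Lyapunov drift off a finite, hence petite, set, using the accessibility of $(0,\dots,0)$ for irreducibility and aperiodicity), but with a different Lyapunov function: you exponentiate the weighted linear functional, $V(x)=\exp(\theta\sum_i c_i x_i)$, while the paper works with the linear function $V(x)=\sum_i \alpha_i x_i+1$ directly. Your weights $c_i=\sum_{j\ge i}(a_j)_+ +\varepsilon(p-i+1)$ are exactly the paper's $\alpha_i$ (with $\varepsilon=\eta/p$, $\eta=1-\sum_i(a_i)_+$), and the telescoping identity $c_i-c_{i+1}=(a_i)_++\varepsilon$ plays the same role in both arguments; in the paper the hypothesis enters through $\alpha_i\le\alpha_1=1$, in yours through $c_1<1$ so that the linearized condition $c_1(a_i)_+\le c_i-c_{i+1}$ holds with slack. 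The trade-off: the linear $V$ only needs the mean of the Poisson step and gives the drift inequality in one line with no parameter tuning, whereas your exponential $V$ requires the Poisson moment generating function and a choice of $\theta$ small enough uniformly over the $p$ coordinates, but in return yields a stronger conclusion (geometric drift for an exponential $V$, hence exponential moments of the stationary distribution). One small imprecision: from coefficients bounded by $\theta c_i-\delta$ you get $PV(x)\le e^{(e^{\theta c_1}-1)\lambda}\,V(x)\,e^{-\delta\sum_i x_i}$, not $e^{-\delta\theta}V(x)\cdot e^{(e^{\theta c_1}-1)\lambda}$; this is harmless since, as you say, the factor $e^{-\delta\sum_i x_i}$ dominates outside a finite set and the complement contributes the additive constant, finite sets being petite here.
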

As announced in the previous section, Theorem \ref{thm:recurrencePHDp}  naturally extends the known condition for non-negative parameters. However, this sufficient condition is quite restrictive because it only considers the positive part of the coefficients and therefore neglects the effect of inhibition. For example, it is natural to inquire whether, in the case where one of the parameters is greater than 1, one can choose the other parameters sufficiently negatively to obtain stability. This question appears to be quite challenging to resolve, as the non-linearity comes into play in such instances. A generalization of the study of this process to an arbitrary number $p$ of parameters seems beyond reach at the moment. 
Therefore, we confine ourselves in the following to the specific case of $p=3$, the case $p=2$ having been addressed in \cite{Costa_Maillard_Muraro_2024}.

\subsection{Three-parameter discrete-time Hawkes process}
In the case where $p=3$, we choose for the sake of clarity to relabel the parameters as $a_1=a$, $a_2=b$ and $a_3=c$.
The discrete time process  $(\tilde{X}_n)_{n \geq 0}$ is now defined with initial condition $(\tilde{X}_0, \tilde{X}_{-1}, \tilde{X}_{-2})$ as $n\ge 1$:
\begin{equation}
\text{conditioned on $\tilde{X}_{n-3},\tilde{}_{n-2},\tilde{X}_{n-1}$: }\tilde{X}_n\sim \mathcal{P}\left( \left(a \tilde{X}_{n-1} + b \tilde{X}_{n-2} + c \tilde{X}_{n-3} + \lambda \right)_+ \right). 
\label{defPHD}
\end{equation}
The associated Markov chain writes $X_n \coloneqq (\tilde X_n,\tilde X_{n-1},\tilde X_{n-2})$ and takes values in $\N^3$.

Our objective is to complete the results of Theorem \ref{thm:recurrencePHDp} by considering cases where the three parameters have different signs and one of them is larger than 1.

%Throughout this paper, we will present typical trajectories of $(X_n)$, corresponding to the cases we will treat. Since the Markov chain $(X_n)$ takes values in $\N^3$, we have plotted these trajectories and presented them in two images corresponding to two different viewing angles, allowing the reader to get a sense of the shape of these trajectories in $\N^3$. For the sake of simplicity, unless stated otherwise, each trajectory was simulated for a number $n = 1000$ of steps, and by fixing the following parameters : $(X_0,X_{-1},X_{-2}) = (0,0,0)$ and $\lambda = 1$. 

\subsubsection{Existing results in the case $c=0$}
When $c=0$, the process $(\Tilde X_n)_{n \geq 0}$ is a discrete Hawkes process with memory of length two, which is studied in \cite{Costa_Maillard_Muraro_2024}. The main result of that article is the following :

\begin{thm*}[Theorem 1 in \cite{Costa_Maillard_Muraro_2024}] 
\label{theoRecall}
Assume $c=0$. Define the function 
\[
b_*(a) = \begin{cases}
1 & a \le 0\\
1-a & a\in(0,2)\\
-\frac{a^2}{4} & a\ge 2
\end{cases}
\]
and define the following sets :
\begin{align}
\label{eq:def_R}
\mathcal R &= \left\{(a,b)\in\R^2: b<b_*(a)\right\}\\
\label{eq:def_T}
\mathcal T &= \left\{(a,b)\in\R^2: b>b_*(a)\right\}.
\end{align}

\begin{itemize}
    \item If $(a,b) \in \mathcal{R}$, then the sequence $(\Tilde X_n)_{n\ge0}$ converges in law as $n\to\infty$ (in fact, the Markov chain $(X_n)_{n\ge0}$ is geometrically ergodic).
    \item If $(a,b) \in \mathcal{T}$, then the sequence $(\Tilde X_n)_{n\ge0}$ satisfies that
almost surely 
$$\Tilde X_n+\Tilde X_{n+1}\underset{n\to\infty}{\longrightarrow}+\infty\,.$$
\end{itemize}
%where $(\Tilde X_n)$ refers here to the specific case of $p=2$.
\end{thm*}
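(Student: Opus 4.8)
\emph{Sketch of a proof.} Since $c=0$, the dynamics reduces to the $\N^2$-valued Markov chain $X_n=(\tilde X_n,\tilde X_{n-1})$, for which $(0,0)$ is an accessible atom. I would establish the first item through a Foster--Lyapunov geometric drift $PV\le\gamma V+K\mathbf 1_C$ with $\gamma<1$, $K<\infty$ and $C$ finite (which, for this chain, yields geometric ergodicity), and the second through a submartingale-type lower bound on $\tilde X_n+\tilde X_{n+1}$ combined with control of the excursions near the set $\{a\tilde X_n+b\tilde X_{n-1}+\lambda\le 0\}$ on which the positive part clips $\tilde X$ to zero.

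\emph{Stability, $(a,b)\in\mathcal R$.} For $a<1$ this is exactly the case $p=2$ of Theorem~\ref{thm:recurrencePHDp}, since $(a)_++(b)_+<1$ is then equivalent to $b<b_*(a)$. The content thus lies in the region $a\ge 1$, where necessarily $b<b_*(a)\le 0$ and inhibition is strong. A one-step linear test function no longer suffices there, because when $b$ is very negative the clipping region is a half-plane-like set and the associated linear map $(x_1,x_2)\mapsto(ax_1+bx_2,x_1)$ may be expanding; but one can observe that, within $\mathcal R\cap\{a\ge 1\}$, whenever this linear map is expanding it has \emph{complex} eigenvalues of modulus $\sqrt{-b}>1$, so the deterministic skeleton oscillates. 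I would exploit this by fixing $V_0(x)=x_1+x_2+1$ and proving, for a suitable fixed $m$, an $m$-step drift $P^mV_0\le\eta V_0+K_0$ with $\eta<1$: the rotation forced by $-b>a^2/4$ drives the trajectory, within a bounded number of steps, into the region where $a\tilde X_n+b\tilde X_{n-1}+\lambda\le 0$, at which point $\tilde X$ is reset to $0$, and since moreover $b<0$ the following step then typically brings the chain to the atom $(0,0)$, so a large value cannot persist. One then promotes the $m$-step drift to a one-step geometric drift by passing through the $m$-step skeleton, which is legitimate because $PV_0$ grows at most linearly. Equivalently, one may run a regeneration argument at the almost surely recurrent clipping times and show that the excursion lengths have exponential tails.

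\emph{Instability, $(a,b)\in\mathcal T$.} In $\mathcal T$ the polynomial $X^2-aX-b$ always has a real eigenvalue $\theta$ of modulus $>1$: a root $\theta>1$ whenever $a+b>1$ (which covers $\mathcal T\cap\{0\le a<2\}$) or whenever $a\ge 2$ (where $b>-a^2/4$ forces a real root larger than $a/2\ge 1$), and a root $\theta<-1$ when $a\le 0$ (where $\mathcal T$ reads $b>1$). When $\theta>0$ I would use the functional $\psi(x_1,x_2)=x_1-\theta''x_2$ built from the left eigenvector, with $\theta''$ the other root ($\theta\theta''=-b$); since $a\ge 0$ in that case, $\psi(x)\ge 0$ forces $ax_1+bx_2+\lambda\ge\lambda>0$, so the positive part is inactive and $\E[\psi(X_{n+1})\mid X_n=x]\ge\theta\,\psi(x)+\lambda$ on $\{\psi\ge 0\}$, a geometric growth with $\theta>1$. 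When $\theta<0$ (that is $a\le 0$, $b>1$) a single step does not preserve positivity; there I would instead work directly with $\tilde X_n+\tilde X_{n+1}$, or with the two-step chain, using that $b>1$ makes $\E[\tilde X_{n+2}\mid\mathcal F_n]\ge b\tilde X_n+\lambda$ as soon as a clip occurs at step $n+1$. In both cases I would then combine the geometric growth with (i) reachability: from any bounded set the chain reaches, with probability bounded below, a state where the relevant functional is large and positive (using $\lambda>0$ and positivity of the Poisson increments), and (ii) a geometric-trials/Borel--Cantelli argument excluding infinitely many returns to the clipping region, to conclude that $\tilde X_n+\tilde X_{n+1}\to+\infty$ almost surely. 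It is the sum of two consecutive terms that diverges, not $\tilde X_n$ alone, precisely because the clips force $\tilde X_n=0$ infinitely often in the oscillatory regime.

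\emph{Main obstacle.} The delicate point, and the reason the threshold is exactly $b_*(a)$, is the regime $a\ge 2$ with $b$ near $-a^2/4$. For stability just below $-a^2/4$ one must make quantitative the competition between the expanding oscillating linear dynamics and the clipping reset, showing that the reset occurs at least once per rotation period (which becomes very long as $b\uparrow-a^2/4$) and caps the amplitude. For instability just above $-a^2/4$ the dynamics is instead monotone (a real double root $a/2\ge 1$ at the boundary, real roots beyond with the larger one $>1$) and never triggers the reset, so growth is driven by that root. Pinning the transition precisely at the double-root locus $b=-a^2/4$ is where the real work lies; the remaining parameter ranges follow from Theorem~\ref{thm:recurrencePHDp} and from the standard dominant-eigenvalue argument.
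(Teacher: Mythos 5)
You should first note that the paper itself does not prove this statement: it is recalled verbatim from \cite{Costa_Maillard_Muraro_2024}, and the relevant benchmark is the method of that paper as mirrored here for $p=3$, namely a one-step Foster--Lyapunov drift for a ratio-type function as in \eqref{functionV} combined with the small set $A$ of Lemma \ref{AisSmall}, and, for transience, the scheme of Lemma \ref{lem:strategy_transience} (geometric growth along sets $S_n$ with summable failure probabilities). Your transience half is essentially that scheme in different clothing: your check that every $(a,b)\in\mathcal T$ produces a real root $\theta$ with $|\theta|>1$, and that on $\{\psi\ge 0\}$ the positive part is inactive (since $\theta''(a-\theta)=(\theta'')^2\ge 0$), is correct, and your reachability-plus-Borel--Cantelli upgrade from ``diverges with positive probability from the atom'' to almost sure divergence of $\tilde X_n+\tilde X_{n+1}$ is the right shape, including the separate treatment of $a\le 0,\ b>1$ via the two-step (alternating) dynamics.

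The stability half is where there is a genuine gap. For $a\ge 1$ you propose a fixed-$m$ drift $P^mV_0\le \eta V_0+K$ with the linear function $V_0(x)=x_1+x_2+1$, on the grounds that the rotation forces a clip ``within a bounded number of steps''. As written this is not uniform in the starting state: the time of the clip and of the subsequent reset $(0,y)\to(0,0)$ depends on the direction of $X_0$, and at a fixed time $m$ the chain can be mid-cycle with $V_0(X_m)$ of order $(\sqrt{-b})^{m'}V_0(X_0)\gg V_0(X_0)$. To repair this one must show the pre-clip duration is bounded uniformly over directions, take $m$ beyond that bound plus the reset time, and control $\mathbb{E}_x[V_0(X_m)]$ on the deviation events (Chebyshev is not sufficient there; Poisson concentration and an induction over the $m$ steps are needed) --- none of which is carried out; alternatively one uses a state-dependent horizon, or, as the authors actually do, replaces $V_0$ by the ratio-type Lyapunov function, whose one-step drift reduces to the negativity of an explicit quadratic form away from an isotropic line with no nonnegative direction, avoiding all multi-step bookkeeping. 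In addition, the sub-case $a\in[1,2)$, $-a^2/4\le b<1-a$ (real eigenvalues in $(0,1)$, no clipping, yet Theorem \ref{thm:recurrencePHDp} inapplicable and any one-step linear drift failing on the $x_1$-axis) is not covered by your rotation mechanism and needs its own argument. Since you yourself defer ``the real work'' at the threshold $b=-a^2/4$, which is precisely the content beyond Theorem \ref{thm:recurrencePHDp}, the first bullet of the statement remains unproved in your proposal; it is a plausible programme, not a proof.
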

Let us highlight that the parameters $a$ and $b$ have asymmetric roles : for any values of $a$, if $b$ is chosen small enough, then the Markov chain $(X_n)_{n\ge0}$ will be recurrent, while as soon as $b>1$, it will be transient.

The rest of the article is devoted to the case where $c\neq0$. We classified our results into two classes : sufficient conditions for recurrence and sufficient conditions for transience. Since we did not manage to classify entirely the space of parameters $(a,b,c)\in\R^3$ we complete our theoretical results with numerical results. %For the sake of simplicity, unless stated otherwise, each trajectory was simulated for a number $n = 1000$ of steps, and by fixing the following parameters : $(X_0,X_{-1},X_{-2}) = (0,0,0)$ and $\lambda = 1$. 
\medskip

Before stating our results, let us introduce important notions that will be necessary in the proofs.
\subsubsection{Linear recurrence, polynomials and discriminant}

Due to the concentration property of the Poisson distribution, as long as the parameter of the Poisson random variable in the definition of $(\Tilde X_n)_{n \geq 0}$ remains positive, we expect that the process behaves similarly to the deterministic sequence $(\Tilde u_n)_{n \geq 0}$, which is defined as follows:
%$$\forall n \geq 0, \quad u_{n} := \left( au_{n-1} + bu_{n-2} + cu_{n-3}+\lambda \right)_+.$$
%To the best of our knowledge, there is currently a lack of literature discussing these specific types of nonlinear recurrence. However, it is worth noting that regarding the associated linear recurrence 
\begin{equation}
\label{linarRec}
\Tilde u_n = a \Tilde u_{n-1} + b \Tilde u_{n-2} + c \Tilde u_{n-3} + \lambda.
\end{equation}
It is a well-known fact that the roots of the polynomial
\begin{equation}
\label{PolynomialP}
P(X) := X^3 - aX^2 - bX - c,
\end{equation}
play a significant role in the identification of sequences that satisfy the recurrence \eqref{linarRec}. More precisely, the stability of the linear recurrence $(\Tilde u_n)_{n \geq 0}$ (in the sense that sequences satisfying recurrence \eqref{linarRec} remain bounded for all $n \in \N$) is characterized  by the condition :
\begin{equation}
\label{condStabLin}
\max \{ |\zeta|, ~ P(\zeta) = 0 \} < 1.
\end{equation}

%However, we have not succeeded in translating condition \eqref{condStabLin} explicitly in terms of $a,b,c$, which constitutes a first analytical challenge in formulating results on the stochastic process $(\Tilde X_n)$. {\color{red}peut-etre enlever cette phrase ? elle ne nous sert pas.}
%In our initial numerical exploration at the beginning of this project, it appeared that introducing non-linearity, \textit{via} the ReLU function, broadened the range of parameter $(a,b,c) \in \R^3$ for which the nonlinear sequence $(u_n)$ remained stable, comparing to its linear version $(\Tilde u_n)$.

The results we will state on the recurrence of the Markov chain $(X_n)_{n \geq 0}$ use another quantity associated with the polynomial $P$, namely its \textit{discriminant}. 
%We recall that for a generic polynomial $A$ of degree $n$, its discriminant is defined as below, with $\text{Res}(A,A')$ denoting the resultant of the polynomials $A$ and its derivative $A'$ : {\color{red} define resultant}
%$$\Disc(A) := \dfrac{(-1)^{\frac{n(n-1)}{2}}}{a_n} \text{Res}(A,A').$$
Concerning the polynomial $P$ we are interested in, its discriminant can be easily computed, yielding : 
\begin{equation}
    \label{eq:disc}
    \text{Disc}(P) := a^2b^2+4b^3-4a^3c-18abc-27c^2
\end{equation}
The sign of $\Disc(P)$ determines the number of roots of $P$ that are complex. For a degree 3 polynomial, on the one hand, if $\Disc(P) < 0$, the polynomial $P$ has an unique simple real root, and two complex conjugate roots. On the other hand, if $\Disc(P) \geq 0$, all the roots of $P$ are real numbers, and they are simple when $\Disc(P) > 0$. 
For more details about resultant and discriminant, we refer the reader to chapter 12 of \cite{gelfand_discriminants_1994}.

In Figure \ref{fig:disc}, we propose a graphical representation of the surface of $\R^3$ delimited by $\Disc (P) = 0$.
\begin{figure}[!h]
 %   \begin{minipage}[c]{0.46\linewidth}
  %      \centering
   %     \includegraphics[scale=0.7]{}
%    \end{minipage}
 %   \hfill%
   % \begin{minipage}[c]{0.46\linewidth}
        \centering
        \includegraphics[scale=1]{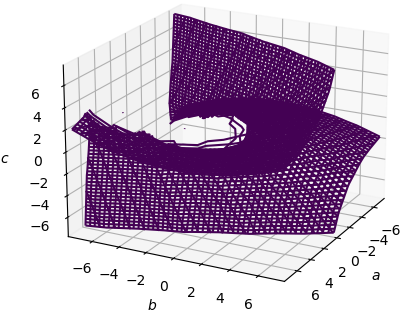}
 %   \end{minipage}
    \caption{Graphical representation of the surface of $\R^3$ delimited by $\Disc(P) = 0$. The axis represents the different values of parameters $a,b,c \in \R$.
    Note that $\Disc(P)$ is negative outside the two surfaces. }
    \label{fig:disc}
\end{figure}
\begin{figure}[!h]
    \begin{minipage}[c]{0.46\linewidth}
        \centering
        \includegraphics[width=0.8\linewidth]{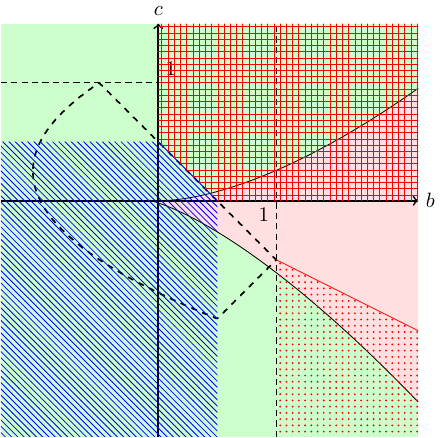}\\
        $a=0.5$
    \end{minipage}
    \hfill%
    \begin{minipage}[c]{0.46\linewidth}
        \centering
        \includegraphics[width=0.8\linewidth]{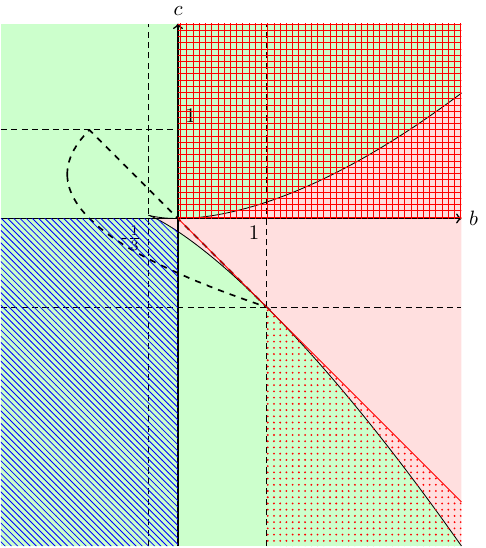}\\
        $a=1$
    \end{minipage}\\
    \begin{minipage}[c]{0.46\linewidth}
        \centering
        \includegraphics[width=0.8\linewidth]{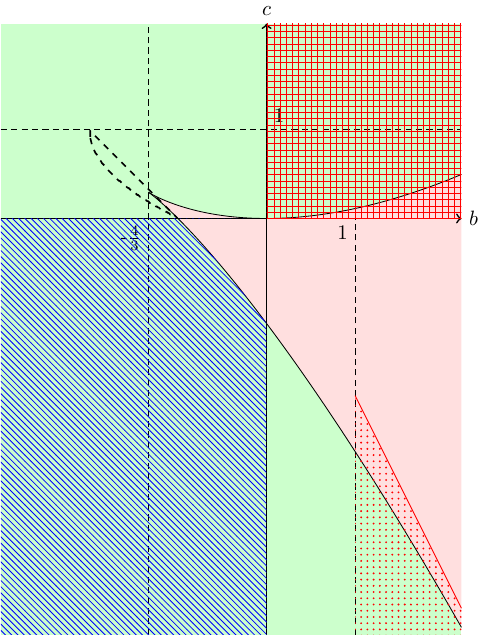}\\
        $a=2$
    \end{minipage}
    \hfill%
    \begin{minipage}[c]{0.46\linewidth}
        \centering
        \includegraphics[width=0.8\linewidth]{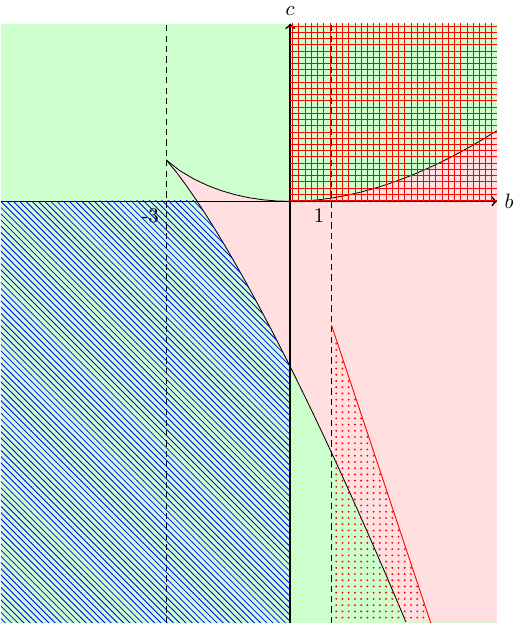}\\
        $a=3$
    \end{minipage}
    \caption{ Two dimensional representation of the areas in $\{b,c\}$ covered by transience and recurrence results for fixed values of $a\in\{0.5,1,2,3\}$. In each figure, the green (resp.red) region
corresponds to $Disc(P ) < 0$ (resp. $Disc(P ) > 0$). The black dashed curve circles the area where the linear recurrence \eqref{linarRec} is stable (note that this
never happens when $a = 3$). The blue lined region corresponds to the area covered
by Theorem \ref{thm:RecurrenceDisc}, and by Theorem \ref{thm:recurrencePHDp} for the case $a = 0.5$. The red dotted region corresponds to Proposition \ref{thm:transience}. The red grid region corresponds to the known result
about transience, discussed in Section \ref{subsec:setting} and proved in Appendix \ref{app:Trans_p}.    }
    \label{fig:disc_2d}
\end{figure}

Returning to our comparison involving deterministic sequences $(\Tilde u_n)_{n \geq 0}$ satisfying \eqref{linarRec}, when $\Disc(P)<0$, since the polynomial $P$ has 2 complex roots, we know that the solutions $(\Tilde    u_n)_{n \geq 0}$ will exhibit oscillatory behaviour, potentially diverging under stronger conditions. Consequently, $(\Tilde    u_n)_{n \geq 0}$ will eventually become negative. This is where the positive part in the definition of $(\Tilde X_n)_{n \geq 0}$ comes into play : by truncating the parameter value of the Poisson random variable to 0, we can expect a stabilizing effect on the asymptotic behaviour of this process.

\subsection{Recurrence}
The following theorem states that it is possible to choose $b$ and $c$ sufficiently negative for the process $X_n$ to be ergodic, for every value of $a$.
%This intuition is confirmed by the following theorem, which constitutes the main result of this article
\begin{thm}
\label{thm:RecurrenceDisc}
%Let $a > 1$. If $b<0$ and $c<0$ are such that $\Disc(P) < 0$, then $(X_n)_{n \geq 0}$ is a geometrically ergodic Markov chain.
Suppose $b<0$, $c<0$ and $\Disc(P) < 0$. Then $(X_n)_{n \geq 0}$ is a geometrically ergodic Markov chain.
\end{thm}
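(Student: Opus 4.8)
The plan is to verify a geometric Foster--Lyapunov drift condition for $(X_n)_{n\ge0}$, the stabilising effect coming from the oscillation forced by the two complex roots of $P$ together with the inhibition encoded in $b<0$ and $c<0$. Since $(0,0,0)$ is an accessible atom and the chain is aperiodic there (from $(0,0,0)$ the intensity is $\lambda>0$, so $\proba{X_{n+1}=(0,0,0)\mid X_n=(0,0,0)}=e^{-\lambda}>0$), it suffices, by the classical theory of \cite{dmps_markov_chains}, to produce $V\colon\N^3\to[1,\infty)$, $\gamma\in(0,1)$, $K<\infty$ and a \emph{finite} set $C$ with $PV\le\gamma V+K\mathds{1}_C$; equivalently, writing $\tau$ for the hitting time of $(0,0,0)$, it is enough to show $\sup_{x\in\N^3}\esp{\beta^{\tau}\mid X_0=x}<\infty$ for some $\beta>1$, in which case $V(x):=\esp{\beta^{\tau}\mid X_0=x}$ does the job. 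I will obtain this from an estimate of the form $\sup_{x\notin C}\proba{\tau_C>N\mid X_0=x}\le\tfrac12$ for a suitable finite $C\ni(0,0,0)$ and integer $N$, where $\tau_C$ is the hitting time of $C$: iterating it with the Markov property gives $\proba{\tau_C>kN\mid X_0=x}\le2^{-k}$, and, $C$ being finite with $(0,0,0)$ accessible, there are $m,\delta>0$ with $\proba{\tau\le m\mid X_0=y}\ge\delta$ for all $y\in C$; together these give a uniform geometric tail for $\tau$.

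The core is a deterministic comparison with the linear recursion. Alongside the clamped recursion $\Tilde u_n=(a\Tilde u_{n-1}+b\Tilde u_{n-2}+c\Tilde u_{n-3}+\lambda)_+$, consider the unconstrained one $\hat u_n=a\hat u_{n-1}+b\hat u_{n-2}+c\hat u_{n-3}+\lambda$, both started from the same $x\in\N^3$. Since $\Disc(P)<0$, the polynomial \eqref{PolynomialP} has a single real root $r$ and a conjugate pair $\rho e^{\pm i\theta}$ with $\rho>0$ and $\theta\in(0,\pi)$; the relation $r\rho^2=c<0$ forces $r<0$, so that $P(1)=(1-r)\,|1-\rho e^{i\theta}|^2>0$ and $u_*:=\lambda/(1-a-b-c)>0$ is a genuine positive fixed point of the recursion. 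Crucially, none of the roots $r,\rho e^{\pm i\theta}$ is a positive real number (here $r<0$ because $c<0$, and $\rho e^{\pm i\theta}\notin\R$ because $\Disc(P)<0$), so no nonzero solution $w$ of the homogeneous recurrence attached to \eqref{linarRec} is eventually non-negative; a compactness argument over such solutions normalised by $\max(|w_0|,|w_{-1}|,|w_{-2}|)=1$ then produces $N\in\N$ and $\varepsilon>0$, depending only on $(a,b,c)$, such that every normalised solution attains a value $\le-\varepsilon$ within its first $N$ steps. Writing $\hat u_n=u_*+w_n$ and using $\max(|w_0|,|w_{-1}|,|w_{-2}|)\asymp M:=\max_i x_i$ for $M$ large, it follows that $\hat u_{j^*}\le-\tfrac{\varepsilon}{2}M$ for some $j^*\le N$ as soon as $M$ is large enough; in particular $\hat u$ becomes negative before time $N$, so the first clamp of $\Tilde u$ occurs at some $j_0\le N$, with $\Tilde u=\hat u$ up to time $j_0-1$, $\Tilde u_{j_0}=0$ and $\Tilde u_{j_0-1},\Tilde u_{j_0-2}\ge0$.

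Now the inhibition takes over. Set $t_0:=\max(\lambda/|b|,\lambda/|c|)$. If $\Tilde u_{j_0-1}>t_0$, the intensity at step $j_0+1$ equals $(b\Tilde u_{j_0-1}+c\Tilde u_{j_0-2}+\lambda)_+\le(b\Tilde u_{j_0-1}+\lambda)_+=0$, so $\Tilde u_{j_0+1}=0$, and then the intensity at step $j_0+2$ equals $(c\Tilde u_{j_0-1}+\lambda)_+=0$, so $\Tilde u_{j_0+2}=0$ and $X_{j_0+2}=(0,0,0)$. If $\Tilde u_{j_0-1}\le t_0$ but $\Tilde u_{j_0-2}>t_0$, the intensity at step $j_0+1$ is $\le(c\Tilde u_{j_0-2}+\lambda)_+=0$, so $X_{j_0+1}=(0,0,\Tilde u_{j_0-1})$ has all coordinates $\le t_0$. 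If $\Tilde u_{j_0-1},\Tilde u_{j_0-2}\le t_0$, then $X_{j_0}=(0,\Tilde u_{j_0-1},\Tilde u_{j_0-2})$ already has all coordinates $\le t_0$. Consequently, taking $C:=\{x\in\N^3:\max_i x_i\le\max(t_0,T)\}$ with $T$ large enough for the previous paragraph, the deterministic clamped recursion started outside $C$ enters $C$ within $N+2$ steps.

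Finally one transfers this to $(\Tilde X_n)$ via Poisson concentration. Starting from $x\notin C$ with $M=\max_i x_i$, the conditional intensities over the window $\{0,\dots,N+2\}$ are $O(M)$ (the companion matrix amplifies by a bounded factor over $N+2$ steps, and $b,c<0$ only decrease the intensities), so a one-sided Chernoff bound and a union bound over $\le N+2$ steps show that, on an event of probability $\ge\tfrac12$, the trajectory $(\Tilde X_j)$ stays of order $O(M)$ and shadows $\hat u$ up to an error $O(\sqrt M)$ until its first clamp; since $\hat u$ drops to $\le-\tfrac{\varepsilon}{2}M$ within $N$ steps, this $O(\sqrt M)$ error cannot prevent a clamp from occurring (for $M$ large), and once $\Tilde X_j=0$ the Poisson parameters in the ensuing cascade are identically $0$, so the cascade runs deterministically and lands in $C$. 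Enlarging $C$ if needed gives $\sup_{x\notin C}\proba{\tau_C>N+2\mid X_0=x}\le\tfrac12$, which closes the argument. I expect the main difficulty to be in the deterministic core: extracting a single pair $(N,\varepsilon)$ uniform over all starting phases (the compactness argument reduces this to the fact, guaranteed by $\Disc(P)<0$ and $c<0$, that no nontrivial homogeneous solution is eventually non-negative) and keeping the implied constants uniform over $\{x:\max_i x_i\text{ large}\}$, together with the bookkeeping of the interplay between the nonlinear clamping and the Poisson fluctuations; the concentration and return-time arguments themselves are routine.
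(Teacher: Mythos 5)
Your argument is sound and reaches the right conclusion, but it takes a genuinely different route from the paper. You work trajectorially: you show that from any large state the unconstrained linear recursion must go strictly negative within a bounded number of steps (because, under $\Disc(P)<0$ and $c<0$, the characteristic roots are a negative real $r$ and a non-real conjugate pair, so no nonzero homogeneous solution is eventually non-negative, and compactness over normalised initial directions makes this quantitative), transfer this to $(\tilde X_n)$ by Poisson concentration, use $b<0,c<0$ to show that the first clamp is followed within two steps by entry into a fixed finite box or into $(0,0,0)$, and convert the resulting uniform bound $\sup_{x\notin C}\P_x(\tau_C>N+2)\le\tfrac12$ into a geometric drift via $V(x)=\E_x[\beta^{\tau}]$. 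The paper instead exhibits an explicit Lyapunov function $V_\alpha(i,j,k)=\frac{i+\alpha j}{j+\alpha k+1}+1$ with $\alpha=\alpha_Q$ the unique real root of $-P(-X)$, reduces the drift inequality to the negativity of an explicit quadratic form (Gauss reduction; $\Disc(P)<0$ enters through $\det M_{\alpha_Q}=0$ and $R(\alpha_Q)>0$, and the isotropic line avoids the positive octant), and uses the set $A=\{ai+bj+ck+\lambda\le 0\}$, which is small precisely because $b,c\le 0$. Your approach is more conceptual and robust—it makes transparent that the mechanism is ``no non-negative real characteristic root forces a sign change, then inhibition collapses the state''—and it avoids having to guess the function $V_\alpha$; the paper's approach is shorter to make rigorous, yields an explicit drift function and constants, and plugs directly into the Meyn--Tweedie/DMPS machinery. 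If you write yours up in full, the two places demanding real care are exactly the ones you flag: (i) the assertion that no nonzero solution of the homogeneous recurrence is eventually non-negative is the crux and needs a proof (it is true here, e.g.\ by Pringsheim's theorem applied to the rational generating function, or by a direct analysis of $A r^n+\rho^n R\cos(n\theta-\psi)$ with $r<0$, $\theta\in(0,\pi)$, including the resonant case $\rho=|r|$); and (ii) the error-propagation/stopping bookkeeping in the shadowing step (conditional Chebyshev plus the companion-matrix amplification up to the first clamp) and the standard regeneration argument turning uniform geometric tails of $\tau_C$ into $\sup_x\E_x[\beta^\tau]<\infty$. None of these is a gap in the sense of a failing step, only detail to be supplied.
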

The proof of Theorem \ref{thm:RecurrenceDisc} is deferred to section \ref{SectionRecurrenceDisc}. 
As a remark, note that in Theorem \ref{thm:RecurrenceDisc}, when $c=0$ we have $\Disc (P) = (a^2+4b)b^2$, making the condition $\Disc(P)<0$ equivalent to the condition $a^2+4b<0$ mentioned in section \ref{theoRecall}.

The following lemma rephrases the condition $\Disc(P)<0$ and will be used several times in the article.
%Based on the expression of $\Disc(P)$, we obtain the following technical lemma precising that for given $a>0$ and $b\in\R$, if $c$ is chosen negative enough, then $\Disc(P)<0$.
\begin{lemma}
\label{lem_discP}
For $a,b,c\in\R$, the following holds:
\begin{itemize}
    \item if $a^2+3b<0$, then $\text{Disc}(P)<0$ for all values of $c\in\R$.
    \item if $a^2+3b\ge 0$ then  $\text{Disc}(P)<0 \Leftrightarrow c\in ]-\infty, c_-[ \cup ]c_+,\infty[$ where
    $$c_-= \frac{-1}{27}(2a^3+9ab+2 (a^2+3b)^{3/2})\qquad c_+= \frac{-1}{27}(2a^3+9ab-2 (a^2+3b)^{3/2})$$
    Furthermore if $a^2+4b>0$, then $c_-<0<c_+$.
\end{itemize}
\end{lemma}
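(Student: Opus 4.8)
The plan is to analyze the sign of $\Disc(P)$ as a function of $c$ for fixed $a,b$, treating it as a quadratic (downward-opening) polynomial in $c$. Writing $\Disc(P) = -27c^2 - (4a^3 + 18ab)c + (a^2b^2 + 4b^3)$, I would compute its discriminant (in the variable $c$), which after simplification factors through $a^2 + 3b$: one finds that the discriminant of this quadratic equals $4(4a^3+18ab)^2/4 + 4\cdot 27(a^2b^2+4b^3)$ up to routine algebra, and the key claim is that it is a positive multiple of $(a^2+3b)^3$. Hence if $a^2 + 3b < 0$ the quadratic in $c$ has no real root, and since the leading coefficient $-27$ is negative and the quadratic takes negative values for large $|c|$, it is negative for \emph{all} $c$; this gives the first bullet.

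For the second bullet, when $a^2+3b \ge 0$ the quadratic in $c$ has (at most) two real roots, and $\Disc(P) < 0$ precisely outside the interval between them, i.e. on $]-\infty,c_-[\,\cup\,]c_+,\infty[$. I would then just identify $c_-$ and $c_+$ with the two roots via the quadratic formula: the sum of roots is $-\frac{4a^3+18ab}{2\cdot 27} \cdot 2 = -\frac{2(2a^3+9ab)}{27}$ — wait, more carefully, with leading coefficient $-27$, the roots are $\frac{-(4a^3+18ab) \pm \sqrt{\Delta_c}}{-54}$ where $\Delta_c$ is the discriminant of the quadratic; rewriting and using $\Delta_c = 4(a^2+3b)^3 \cdot (\text{const})$, this should match exactly the stated expressions $c_\pm = \frac{-1}{27}(2a^3 + 9ab \mp 2(a^2+3b)^{3/2})$ after checking the constant is such that $\sqrt{\Delta_c}/54 = \frac{2}{27}(a^2+3b)^{3/2}$. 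The ordering $c_- < c_+$ follows since $(a^2+3b)^{3/2} \ge 0$.

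For the final assertion, assume $a^2 + 4b > 0$. I would show $c_- < 0 < c_+$ by examining the sign of $\Disc(P)$ at $c = 0$: indeed $\Disc(P)\big|_{c=0} = a^2 b^2 + 4b^3 = b^2(a^2 + 4b) > 0$ (using $b \ne 0$, which is implicit since $a^2+4b>0$ would force $b>0$ or, if $b\le 0$... actually $a^2+4b>0$ with $b=0$ gives $a\neq 0$ and then $\Disc(P)|_{c=0}=0$, so one must handle that edge case — but if $b=0$ and $a\ne0$ then $a^2+3b=a^2>0$ and $c_\pm = \frac{-1}{27}(2a^3 \mp 2|a|^3)$, giving $c_- = \frac{-4a^3}{27}$ or $0$ and $c_+ = 0$ or $\frac{-4a^3}{27}$ depending on $\mathrm{sign}(a)$, which is a genuine boundary case; I would note the strict inequality $c_-<0<c_+$ may need $b\ne 0$, or state it under $a^2+4b>0$ which typically is used with $b<0$ in the applications). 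Since $\Disc(P)$ is positive at $c=0$ and negative outside $[c_-,c_+]$, the point $0$ must lie strictly inside $]c_-,c_+[$, giving $c_- < 0 < c_+$.

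The main obstacle is purely computational: verifying that the discriminant of the quadratic $c \mapsto \Disc(P)$ is exactly $\frac{4}{?}(a^2+3b)^3$ with the right constant so that the square roots match the stated $c_\pm$. This is a finite polynomial identity in $a,b$ that can be expanded directly; care is needed to track the factor $27$ coming from the leading coefficient and to get the $\mp$ signs consistent between $\sqrt{(a^2+3b)^3} = (a^2+3b)^{3/2}$ and the labelling $c_- < c_+$.
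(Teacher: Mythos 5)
Your proposal is correct and follows essentially the same route as the paper's own proof: view $\Disc(P)$ as the downward-opening quadratic $S(c)=-27c^2-(4a^3+18ab)c+b^2(a^2+4b)$ in $c$, verify that its discriminant equals $16(a^2+3b)^3$ (so $\sqrt{\Delta_c}/54=\tfrac{2}{27}(a^2+3b)^{3/2}$, giving exactly the stated $c_\pm$), and deduce $c_-<0<c_+$ from $S(0)=b^2(a^2+4b)>0$. Your side remark about the boundary case $b=0$ (where $S(0)=0$ and one of $c_\pm$ is $0$, so the strict double inequality fails) is a valid observation that the paper's proof glosses over by writing $S(0)>0$ without noting that this requires $b\ne 0$; it is harmless for the paper's applications, where $b<0$.
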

The proof of this Lemma is postponed in Appendix \ref{app:technique}.

Theorem \ref{thm:RecurrenceDisc} can be interpreted as saying that for any given excitation $a$, possibly much greater than $1$, there exist sufficiently strong inhibitions $b,c < 0$ such that the parameters of the process $(\Tilde X_n)_{n \geq 0}$ satisfy the assumptions of Theorem \ref{thm:RecurrenceDisc}. 

%This completes the results of Theorem \ref{thm:transience} and  implies that the order of parameters is important, introducing an asymmetry in their roles, which is a non trivial property of the process $(\Tilde X_n)_{n \geq 0}$ in view of its initial definition. 

\subsection{Transience}
\label{Transience results}
%We start by considering cases where $b$ or $c$ are larger than $1$.
%We derive sufficient conditions which guarantee that the process $(\Tilde X_n)_{n \geq 0}$ will diverge in the sense that almost surely,
%$$\Tilde X_n + \Tilde X_{n+1} + \Tilde X_{n+2} \underset{n \to +\infty}{\longrightarrow} +\infty.$$
%In terms of the Markov chain $(X_n)_{n\geq0}$, this is the object of the following :
The following theorem provides some simple sufficient conditions for transience. This partially completements the results of Theorem \ref{thm:RecurrenceDisc}. It shows in particular that the order of parameters is important, introducing an asymmetry in their roles, which is a non trivial property of the process $(\Tilde X_n)_{n \geq 0}$ in view of its initial definition. 
\begin{prop}
\label{thm:transience}
If $a,b,c \in \R$ satisfy one of the following conditions,
\begin{enumerate}
\item $a,b<0$ and $c>1$, or
\item $b>1$ and $ab+c<0$
\end{enumerate}
then the Markov chain $(X_n)_{n\geq0}$ is transient. 
\end{prop}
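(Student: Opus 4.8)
\textbf{Proof plan for Proposition \ref{thm:transience}.}

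The overall strategy is the standard one for proving transience of a Markov chain on $\N^3$: exhibit a suitable Lyapunov-type function $V:\N^3\to\R_+$ and a ``good'' region $A\subset\N^3$ such that, started from a state in $A$ that is large enough, the chain has positive probability of never leaving $A$ while $V(X_n)$ tends to infinity (equivalently, one uses a drift criterion for transience, e.g. Theorem 8.4.2 in Meyn--Tweedie or the criterion in \cite{dmps_markov_chains}). Since the Poisson distribution concentrates around its mean when the mean is large, on the region where the intensity $s_{x_1,x_2,x_3}=(ax_1+bx_2+cx_3+\lambda)_+$ is large we may couple $\Tilde X_n$ with a deterministic dynamics that is, up to lower-order fluctuations, governed by the linear recursion $\Tilde u_n = a\Tilde u_{n-1}+b\Tilde u_{n-2}+c\Tilde u_{n-3}$. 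So the heart of the argument is to find, in each of the two parameter regimes, an explicit escaping mechanism for this linear recursion that stays inside a region where the positive part is not active (or only helps), and then to carry the stochastic fluctuations through a Borel--Cantelli / martingale argument.

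For case (1), $a,b<0$, $c>1$: here the intuition is a two-step-delayed explosion. If at some time the triple $(\Tilde X_{n-1},\Tilde X_{n-2},\Tilde X_{n-3})$ has $\Tilde X_{n-3}$ very large and $\Tilde X_{n-1},\Tilde X_{n-2}$ equal to zero (or small), then the intensity at time $n$ is $(c\Tilde X_{n-3}+\lambda)_+ = c\Tilde X_{n-3}+\lambda$, which is huge, so $\Tilde X_n$ is of order $c\Tilde X_{n-3}$; but then at times $n+1,n+2$ the relevant contributions $a\Tilde X_n$, $b\Tilde X_n$ (and $b\Tilde X_{n-1}$, etc.) are negative and large in modulus, forcing the intensities to be truncated to $0$, hence $\Tilde X_{n+1}=\Tilde X_{n+2}=0$ almost surely. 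Three steps later the large value $\Tilde X_n$ reappears in the ``$c$'' slot and gets multiplied by $c>1$ again. I would make this precise by defining, along the subsequence $n_k = n_0 + 3k$, the event that $\Tilde X_{n_k}\ge (c')^k M$ for some $1<c'<c$ and $\Tilde X_{n_k-1}=\Tilde X_{n_k-2}=0$, and showing by a Poisson lower-deviation bound (intensity $\sim cM(c')^{k-1}$, so $\mathbb P(\mathrm{Poi}(\mu)<(1-\varepsilon)\mu)\le e^{-c\varepsilon^2\mu}$) that, conditionally on starting large enough, all these events occur simultaneously with probability bounded below; the zeros at the intermediate steps are automatic once $aM$ and $bM$ dominate $\lambda$. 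A summable-error Borel--Cantelli argument then gives $\Tilde X_{n_k}\to\infty$, hence transience.

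For case (2), $b>1$ and $ab+c<0$: the mechanism is a one-step-delayed explosion driven by $b$. If $\Tilde X_{n-2}$ is large and $\Tilde X_{n-1}=0$, the intensity at time $n$ is $(b\Tilde X_{n-2}+c\Tilde X_{n-3}+\lambda)_+$. We want to arrange that, at the relevant times, $\Tilde X_{n-1}=0$ so the ``$a$'' term drops out and the ``$b$'' term dominates; the condition $ab+c<0$ is exactly what guarantees that the step immediately following a large value produces a truncation to $0$: if $\Tilde X_m$ is of order $b Y$ (coming from $\Tilde X_{m-1}=0$, $\Tilde X_{m-2}=Y$ large), then at time $m+1$ the intensity involves $a\Tilde X_m + b\Tilde X_{m-1}+\cdots = a b Y + c Y + o(Y) = (ab+c)Y + o(Y) < 0$, forcing $\Tilde X_{m+1}=0$, which then feeds back to allow another $b$-amplification. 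So along $n_k = n_0+2k$ one sets up the event $\{\Tilde X_{n_k}\ge (b')^k M,\ \Tilde X_{n_k-1}=0\}$ with $1<b'<b$, and runs the same Poisson concentration plus Borel--Cantelli scheme. (One has to be slightly careful about the sign and size of the $c\Tilde X_{n_k-3}$ term inside the intensity at the ``up'' step; since $\Tilde X_{n_k-3}=\Tilde X_{n_{k-1}-1}=0$ by construction, that term vanishes, so the intensity there is simply $b\Tilde X_{n_k-2}+\lambda$ — this is why the two-periodic pattern closes up cleanly.)

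The main obstacle, in both cases, is controlling the stochastic fluctuations uniformly along the whole infinite trajectory rather than for a single step: one needs the ``large'' value to grow geometrically so that the Poisson relative-deviation probabilities $e^{-c\varepsilon^2\mu_k}$ are summable in $k$, and one needs to check that a single choice of constants ($M$ large, $\varepsilon$ small, $c'$ or $b'$ strictly between $1$ and $c$ resp.\ $b$) makes all the truncation events at the intermediate steps occur deterministically and all the lower-deviation events occur with total failure probability $<1$. Formally this is cleanest via the transience drift criterion: take $V(x_1,x_2,x_3)$ to be something like $x_3 \mathbf 1_{x_1=x_2=0}$ (case 1) or $x_2\mathbf 1_{x_1=0}$ (case 2), possibly composed with $\log$, restricted to the region where $V$ is large, verify $\mathbb E[V(X_{n+p})\mid X_n=x] \ge (1+\delta)V(x)$ on that region together with a uniform lower bound on the probability of staying in the region, and quote the criterion. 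I would also remark that this proposition only asserts transience and makes no claim about the rate of growth of $\Tilde X_n$ (unlike the $\sum a_i>1$ case), which is consistent with the intermittent ``explode, then two/three steps of zeros'' picture.
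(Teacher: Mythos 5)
Your plan is correct and is essentially the paper's own proof: the paper formalizes exactly these two escape patterns (the period-3 cycle $(i,0,0)\to(0,i,0)\to(0,0,i)\to(\mathcal{P}(ci+\lambda),0,0)$ for case (1), and the period-2 cycle $(0,i,0)\to(\approx bi,0,i)\to(0,\approx bi,0)$ for case (2)) via Lemma \ref{lem:strategy_transience}, with sets $S_n=\{(i,0,0):i\ge r^n\}$ resp. $S_n=\{(0,i,0):i\ge r^n\}$, geometric thresholds $r\in(1,c)$ resp. $r\in(1,b)$ with $ab+c+b-r<0$, and Bienaym\'e--Chebychev rather than Chernoff bounds to get $\sum_n(1-p_n)<\infty$. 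The only presentational difference is that in case (2) the paper observes the chain at the post-truncation states $(0,i,0)$, so that the two-sided concentration of $\Tilde X_1$ around $bi+\lambda$ simultaneously yields the geometric growth and the truncation $a\Tilde X_1+ci+\lambda\le 0$ (the upper deviation is what matters when $a\ge 0$), a detail your one-sided events $\{\Tilde X_{n_k}\ge (b')^kM,\ \Tilde X_{n_k-1}=0\}$ would need to absorb by also recording the concentration window relative to the previous large value.
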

Proposition \ref{thm:transience} states that when $b$ or $c$ is greater than 1, and the other parameters are negative, $(X_n)_{n \geq 0}$ is transient. To rephrase, there is no inhibition, no matter how strong, that yields a stable process as long as $b$ or $c$ is greater than 1.
 
%one might expect this result to hold whenever $b$ or $c$ is greater than 1, regardless of the values of the other parameters. However, we have not succeeded in proving this intuitive statement.

%To conclude this section, we again refer to the comparison with the linear deterministic system \eqref{linarRec} as well as the roots of the polynomial $P$ \eqref{PolynomialP}. In our numerical study, the following intuitive result emerged:
The proof of Proposition \ref{thm:transience} can be found in sections \ref{SectionTransience1} for case $i)$ and \ref{SectionTransienceb} for case $ii)$.
Let us remark that that second case, the condition $ab+c<0$ is actually related to $\Disc(P)$ being negative as it is precised in Lemma \ref{Lemme ab+c}.

\subsection{Conjecture and numerical illustrations}
\label{Conjecture}

The proof of Theorem \ref{thm:RecurrenceDisc} is based on a Lyapunov function argument following the theory developed by Meyn and Tweedie in the 80's.  %Under assumptions $a\ge0,b<0,c<0$ such that $\Disc(P)<0$, we prove that there exists a Lyapunov function for the Markov chain $(X_n)$, and in a second time, we identified a \textit{small} set (see section \ref{sec:Lyap}).
The construction of the Lyapunov function requires $c<0$ while $b<0$ is used to construct an appropriate small set (see section \ref{sec:Lyap}). 
Based on the numerical experiments conducted, we believe that it is possible to slightly relax the assumptions of Theorem \ref{thm:RecurrenceDisc} as follows:
\begin{conjecture}
\label{conjecture_b<1}
If $b \leq 1$ and $c<0$ are such that $\Disc(P) < 0$, then $(X_n)$ is an ergodic Markov chain. 
\end{conjecture}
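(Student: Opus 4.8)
The plan is to prove the conjecture inside the same Meyn--Tweedie framework used for Theorem \ref{thm:RecurrenceDisc}, exploiting the fact that the conjecture retains the two hypotheses $c<0$ and $\Disc(P)<0$ and only relaxes $b<0$ to $b\le 1$. Recall from Section \ref{sec:Lyap} that geometric ergodicity follows once one produces a function $V\colon\N^3\to[1,\infty)$, a constant $\gamma\in(0,1)$, a finite constant $K$ and a petite set $C$ with the geometric drift inequality $PV\le \gamma V+K\mathbf 1_C$, the chain being weakly irreducible and aperiodic through the accessible atom $(0,0,0)$. The first step is to reuse verbatim the Lyapunov function $V$ built for Theorem \ref{thm:RecurrenceDisc}: since its definition and the verification of the drift inequality rely only on the oscillatory behaviour of the deterministic recursion \eqref{linarRec} --- governed by the pair of complex roots that exist precisely when $\Disc(P)<0$ --- together with the truncating effect of the negative coefficient $c$, I would re-audit that computation and argue that it goes through unchanged for every $b\le 1$ (indeed for every real $b$) as long as $c<0$ and $\Disc(P)<0$. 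This reduces the conjecture to a single missing ingredient: showing that the set $C$ carrying the correction term $K\mathbf 1_C$ remains petite when $0\le b\le 1$.

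For that decisive step I would work through the accessible atom. Because $(0,0,0)$ is reached from any state in three steps with positive probability, it suffices to bound this hitting probability from below uniformly over $C\subset\{V\le R\}$; any sublevel set on which such a bound holds is small, hence petite, and geometric ergodicity follows. When $b<0$ the set $C$ is bounded in $\N^3$ and the uniform lower bound is just the minimum over a finite set, which is exactly why the original argument required $b<0$. For $0\le b\le 1$ the set $C$ may instead be unbounded in the middle coordinate $x_2$, since a large $x_2$ with $b\ge 0$ \emph{increases} the Poisson parameter $(ax_1+bx_2+cx_3+\lambda)_+$ and so cannot be made to descend in a single step. The mechanism I would exploit is temporal: a value occupying the middle coordinate is, one step later, shifted into the third coordinate where it is weighted by $c<0$, and combined with the oscillation guaranteed by $\Disc(P)<0$ this repeatedly truncates the Poisson parameter to $0$ and regenerates small states. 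The role of $b\le 1$ is to ensure that the values transiting through the middle coordinate are not amplified between these truncations, so that along the unbounded directions of $C$ one still obtains a uniformly positive probability of returning to a fixed finite neighbourhood of the atom in a bounded number of steps.

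The main obstacle is precisely this uniform quantification along the unbounded part of $C$: one must control the intermediate values produced while a large coordinate propagates, track the concentration of the Poisson increments against the worst-case interplay between the sign of $a$ and the growth permitted by $b\le 1$, and then convert the forced hit at $0$ into a uniform minorization $P^m(x,\cdot)\ge \varepsilon\,\delta_{(0,0,0)}$ over all $x\in C$. I expect $b\le 1$ to be sharp here, and to enter the estimate at exactly the critical value: the transience regime $b>1$ is tied to $ab+c<0$, which by the lemma relating that condition to the discriminant is compatible with $\Disc(P)<0$, and the second case of Proposition \ref{thm:transience} then produces transience through geometric growth of the middle coordinate --- precisely the failure mode the bound $b\le 1$ is designed to exclude. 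A secondary, more routine obstacle is the re-audit of the drift computation of Theorem \ref{thm:RecurrenceDisc} to confirm that no inequality there silently used $b<0$ rather than merely $\Disc(P)<0$ and $c<0$.
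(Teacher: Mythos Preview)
The statement you are attempting to prove is labelled and treated in the paper as a \emph{conjecture}: the authors do not give a proof, only numerical evidence (Figures~\ref{ExplodingProportion}--\ref{ExplodingExcursions}). There is therefore no ``paper's own proof'' to compare against. What the paper does offer is the remark immediately following the proof of Theorem~\ref{thm:RecurrenceDisc}: for $b\in(0,1)$ and $\Disc(P)<0$, the function $V_{\alpha_Q}$ from \eqref{functionV} \emph{remains} a Lyapunov function, but the set $A$ is \emph{no longer small}. Your diagnosis thus matches the authors' exactly --- the drift inequality survives, and the missing piece is the petiteness of the correction set.

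However, your proposal is a strategy, not a proof, and the gap is precisely the one the authors could not close. You describe a plausible mechanism --- a large middle coordinate migrates to the third slot after one step, is multiplied by $c<0$, and forces the Poisson parameter to truncate to zero --- and you correctly flag that $b\le 1$ should prevent amplification during transit. But you do not actually carry out the uniform minorization $P^m(x,\cdot)\ge \varepsilon\,\delta_{(0,0,0)}$ over the unbounded part of $C$. Concretely, take $(i,0,k)\in A$ with $i$ large and $k$ large enough that $ai+ck+\lambda\le 0$. One step gives $(0,i,0)$; the next Poisson parameter is $bi+\lambda$, which for $b\in(0,1]$ is of order $i$, so $X_2=(\ell,0,i)$ with $\ell$ typically of order $bi$. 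The parameter at the next step is $a\ell+ci+\lambda\approx(ab+c)i$, whose sign depends on $ab+c$ --- and even when it is negative, you then land at $(0,\ell,0)$ with $\ell\approx bi$, and the cycle repeats with values contracting only geometrically in $b$. Turning this heuristic into a uniform lower bound on $P^m(x,(0,0,0))$ over \emph{all} of $A$, for a \emph{fixed} $m$ (or a fixed sampling distribution, for petiteness), is the substantive difficulty, and nothing in your outline addresses it. Until that step is executed, the conjecture remains open, and your proposal should be read as a restatement of the obstacle rather than a resolution of it.
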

Note that furthermore, if $a>1, b>1$ and $c<0$ such that $\Disc(P) < 0$, then using properties of the discriminant, it is possible to prove that $ab+c<0$ (see Lemma \ref{Lemme ab+c} in Appendix \ref{app:technique}). As a consequence we can apply Theorem \ref{thm:transience} $ii)$ and deduce that the Markov chain $(X_n)$ is transient. %Indeed, under theses assumptions, we always have $ab+c < 0$, according to the following :

Let us now present our numerical simulations to handle the case $a>1$, $b\in[0,1)$ and $c<0$ such that $\Disc(P)=0$. In all the following simulations we chose the initial condition $(X_{-2}, X_{-1},X_0)=(0,0,0)$ and $\lambda=1$.
The state (0,0,0) is of notable interest since it is an accessible state: for any values of $a, b, c$, it is always possible to reach this state from any other state in 3 steps.
We illustrate the fact that the following stopping time :
$$\tau_0 := \inf \{ n \geq 1 ~|~ X_n = (0,0,0) \},$$
satisfies 
\[\begin{cases}
    \P_{(0,0,0)}(\tau_0 = +\infty) = 0\quad\text{ if }\quad b \leq 1, \\
    \P_{(0,0,0)}(\tau_0 = +\infty) > 0\quad\text{ if }\quad b>1.\\
\end{cases}\]

\vspace{0.2cm}
To approach as closely as possible the simulation of $\tau_0$, we instead consider the following random variable, where $n$ is chosen to be large (in the following illustration, $n = 10000$) :
$$\tilde \tau_0 := \tau_0 \wedge n.$$

However, due to numerical reasons, we cannot directly simulate this random variable. This is because, since we expect the Markov chain to be transient when $b>1$, such a direct simulation would yield arbitrarily large values when simulating $(X_n)$, potentially leading to computational errors. To avoid this, we fix a very large threshold denoted by $M$ and simulate the following random variable: 
$$\hat \tau_0 := \Tilde \tau_0 \mathds 1_{(X_n) \text{ did not explode before reaching (0,0,0)}}+ (n+1) \mathds{1}_{(X_n) \text{ exploded}} ,$$
where we considered that the Markov chain $(X_n)$ exploded if there exists $m\leq n$ such that $\Tilde X_m > M$. In the case of an explosion, this amounts to considering $\tau_0 = +\infty$. We then set an arbitrary value, here $n+1$, for $\hat \tau_0$.

\vspace{0.2cm}

We consider a setting where $b$ varies in $[0,4]$ while maintaining a negative discriminant, in order to observe the phase transition at $b=1$. We fixed $a=3$ and $c=-15$ : for this choice of parameters, for any $b \in [0,4], ~ \Disc(P)<0$.  For the sake of conciseness, unless stated otherwise, we will present our numerical study for these given fixed values; however, we have observed consistent behaviour for all tested values of $a$ and $c$ such that $\Disc(P) < 0$ for all $b$ considered.
\vspace{0.2cm}

We first estimate the probability that an excursion starting from $(0,0,0)$ explodes. In Figure \ref{ExplodingProportion} we plot the values obtained by a Monte Carlo estimation with a sample size of $N\approx 10^6$. We provide confidence intervals using Clopper-Pearson intervals which are \textit{exact} confidence intervals for binomial proportion $p$, having a coverage at least equal to $1-\alpha$ for all $p \in (0,1)$. If $0\le X\le N$ denotes the number of successes, this confidence interval is given for $X \ne 0$ and $X\ne N$ by (see Section 2.1 of \cite{thulin_cost_2014}):
$$\Big[B(\alpha/2,X,N-X+1), \quad B(1-\alpha/2,X+1,N-X) \Big],$$
where $B(\gamma,x,y)$ is the $\gamma$-quantile of the $Beta(x,y)$ distribution. If $X = 0$, the interval takes on the form
$$\Big[0,1-(\alpha/2)^{1/N} \Big].$$

\begin{figure}[!h]
\centering
\includegraphics[scale=0.2]{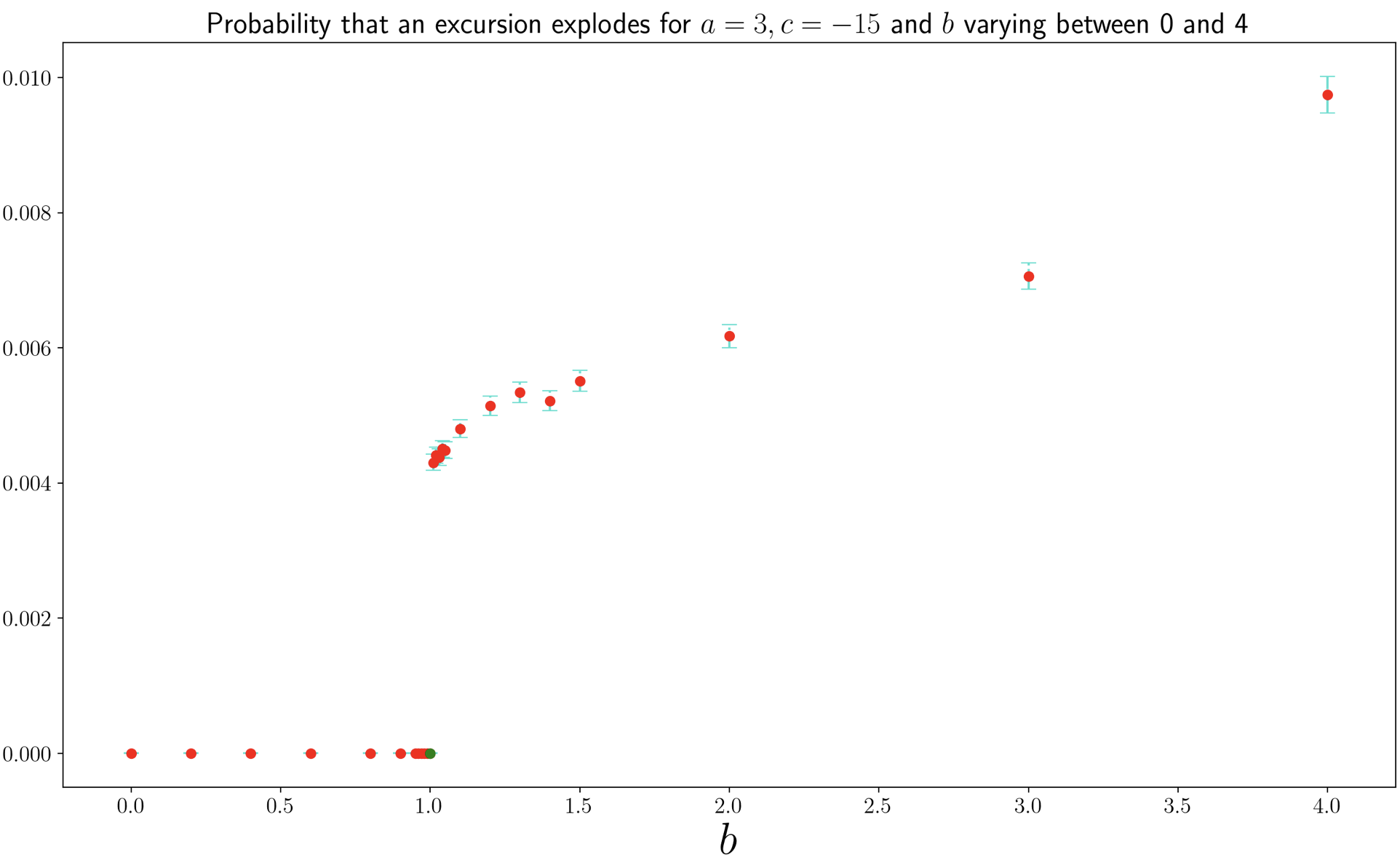}
\caption{Plot (in red) of the proportion of excursions that have exploded relative to the total number of excursions, for some $b \in [0,4]$. For clarity, we put a green dot when $b=1$. Clopper-Pearson type confidence intervals (with $\alpha = 0.01$) around the calculated values have also been plotted in blue. It is noteworthy that the positive proportions exhibit remarkably low values : even within the context where $b>1$, the number of excursions that have exploded remains very small compared to the overall number of excursions.}
\label{ExplodingProportion}
\end{figure}

On Figure \ref{ExplodingProportion}, we observe the transition occurring at $b=1$: for $b\leq 1$, none of the simulated excursions has exploded, whereas there is a strictly positive number of them for $b>1$. Additionally, it is noteworthy that the probability of exploding seems to be increasing in $b$. 

We complete our numerical study by drawing the empirical distribution function of $\hat{\tau}_0$ in Figure \ref{EmpiricalDistribution}. 
\begin{figure}[!h]
\centering
\includegraphics[scale=0.4]{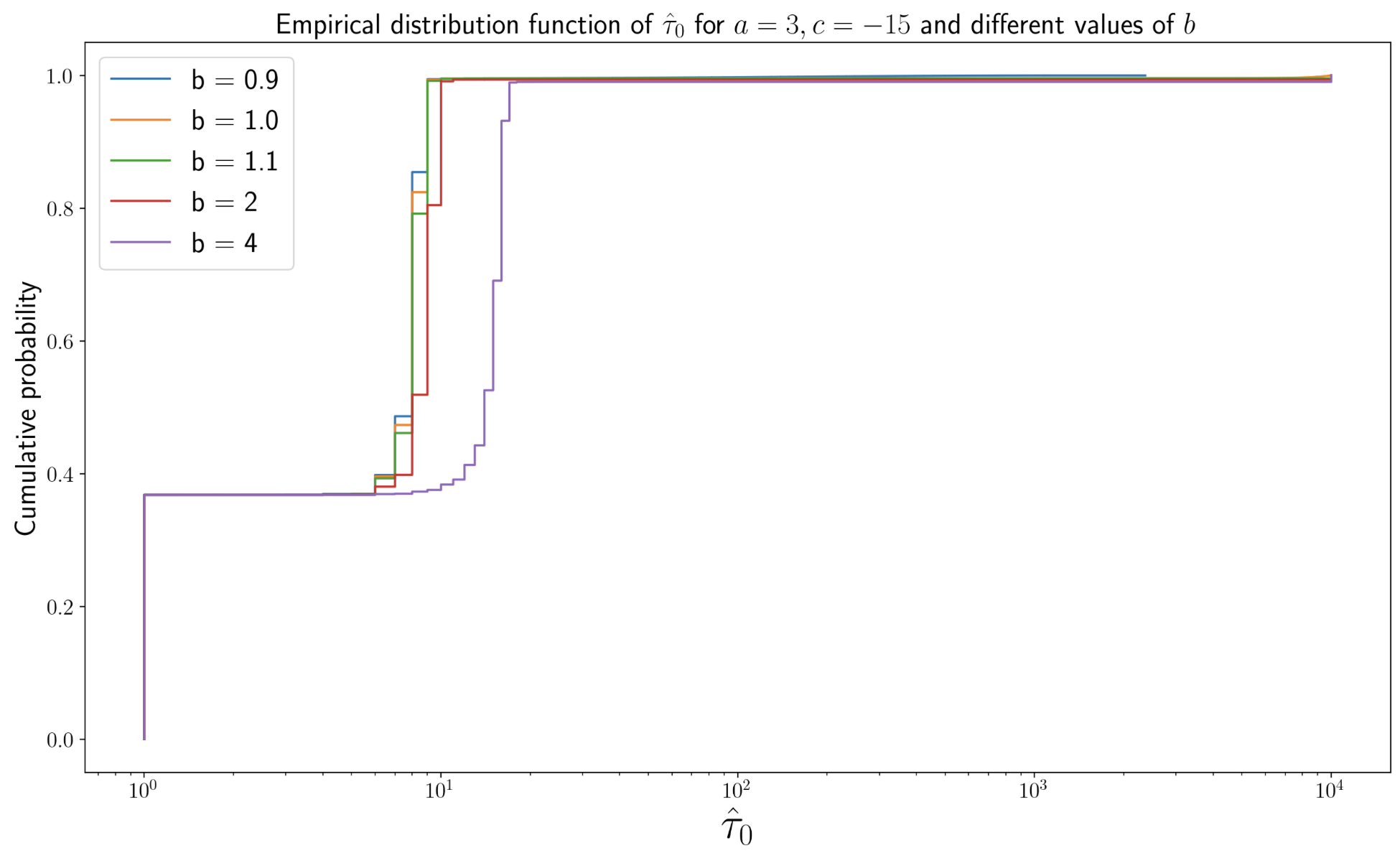}
\caption{Empirical distribution function of the random variable $\hat \tau_0$, for some $b \in [0.9,4]$ (the $\hat \tau_0-$axis is log scaled). For values of $b$ strictly greater than 1, an atom appears at $n+1$: this is a rephrasing of the phenomenon observed in Figure \ref{ExplodingProportion}. In the case of $b=1$, some excursions may attain significant lengths without having exploded.}
\label{EmpiricalDistribution}
\end{figure}
This confirms that as the parameter $b$ increases, the length of excursions $\hat \tau_0$ also increases. For example, for $b=0.9$, the length of excursions never reaches $10^4$. This growth of $\hat \tau_0$ with $b$ appears to become more pronounced as $b$ increases. Nevertheless, we observe that the transition from $b\leq 1$ to $b > 1$ does not drastically change the length of the majority of excursions. However, it enables the observation of excursions of arbitrarily long lengths which happens, as discussed before, in a proportion significantly smaller compared to the total number of excursions considered.

\vspace{0.2cm}

We now put ourselves within a different context in an attempt to explain this very low proportion of excursions that have exploded, even with large values of $b$. We believe that this proportion is strongly dependent on the parameter $a$. Therefore, we conducted a series of simulations where the parameter $a$ varies between 0.1 and 8, while maintaining $\Disc(P)<0$ in order to be consistent with the conjectural framework. To achieve this, we set $b=8$ and $c=-121$. The construction of Figure \ref{ExplodingProportionA} closely follows the procedure outlined for Figure \ref{ExplodingProportion}. 

\begin{figure}[!h]
\centering
\includegraphics[scale=0.4]{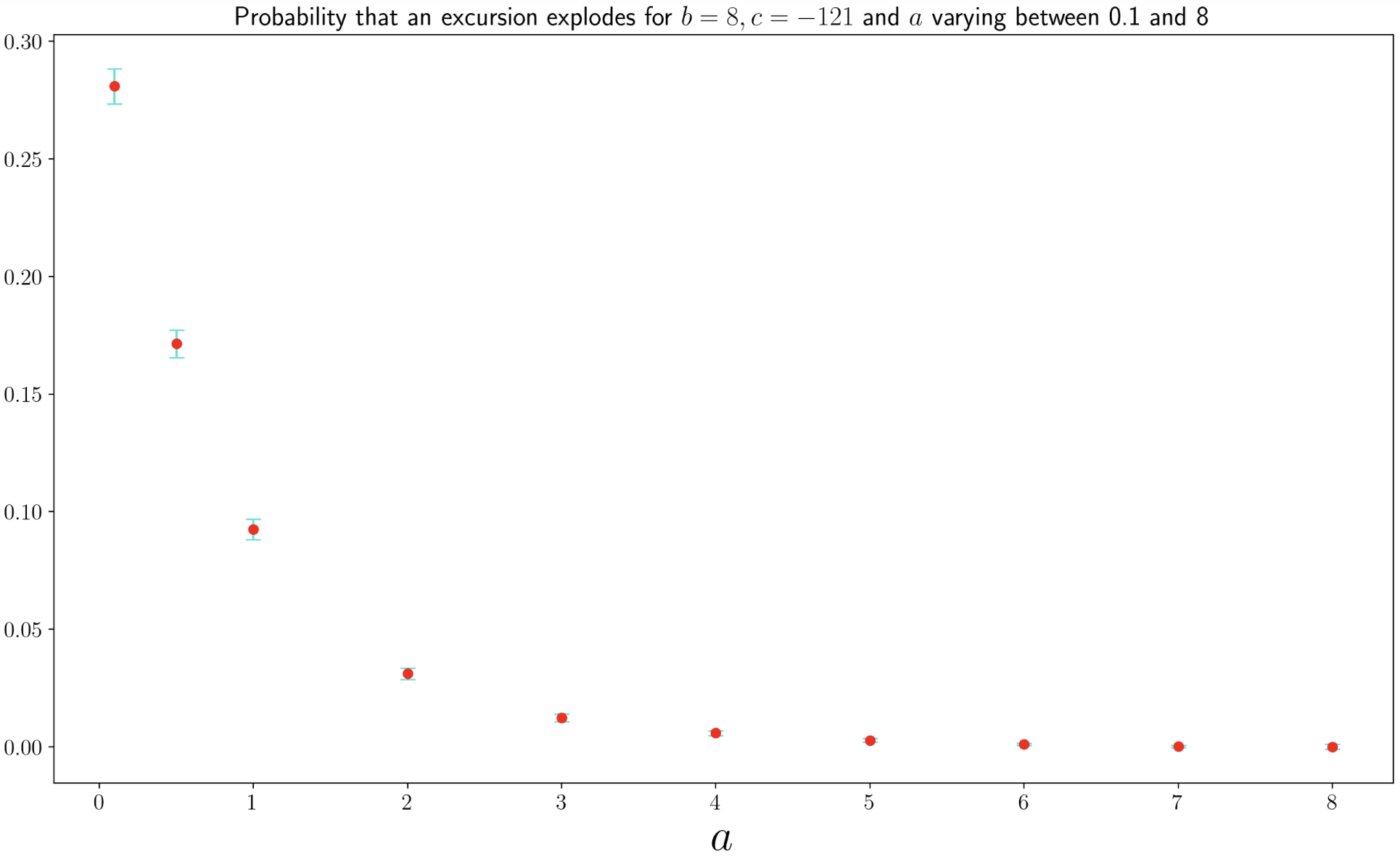}
\caption{Plotting the proportions (and confidence intervals) of excursions that have exploded relative to the total number of excursions. The graph is constructed similarly to Figure \ref{ExplodingProportion}. A strong decrease to 0 is observed, which may be counter-intuitive at first glance.}
\label{ExplodingProportionA}
\end{figure}

\vspace{0.2cm}

To conclude this section of numerical illustrations, we end with a final figure where we use parameters $a=3, b=1.1, c=-15$. We went a step further in analysing exploding excursions for this setting. It seemed that they all followed a similar pattern, alternating between 0 and positive numbers after a certain time. Since $b>1$, the positive numbers tended to increase on average. In the next graph, we randomly picked some excursions of $(\Tilde X_n)$  that exploded with $b=1.1$ as an illustration of our statement. Each color in the graph corresponds to one excursion, traced over the initial thirty values.  

\begin{figure}[!h]
\centering
\includegraphics[scale=0.2]{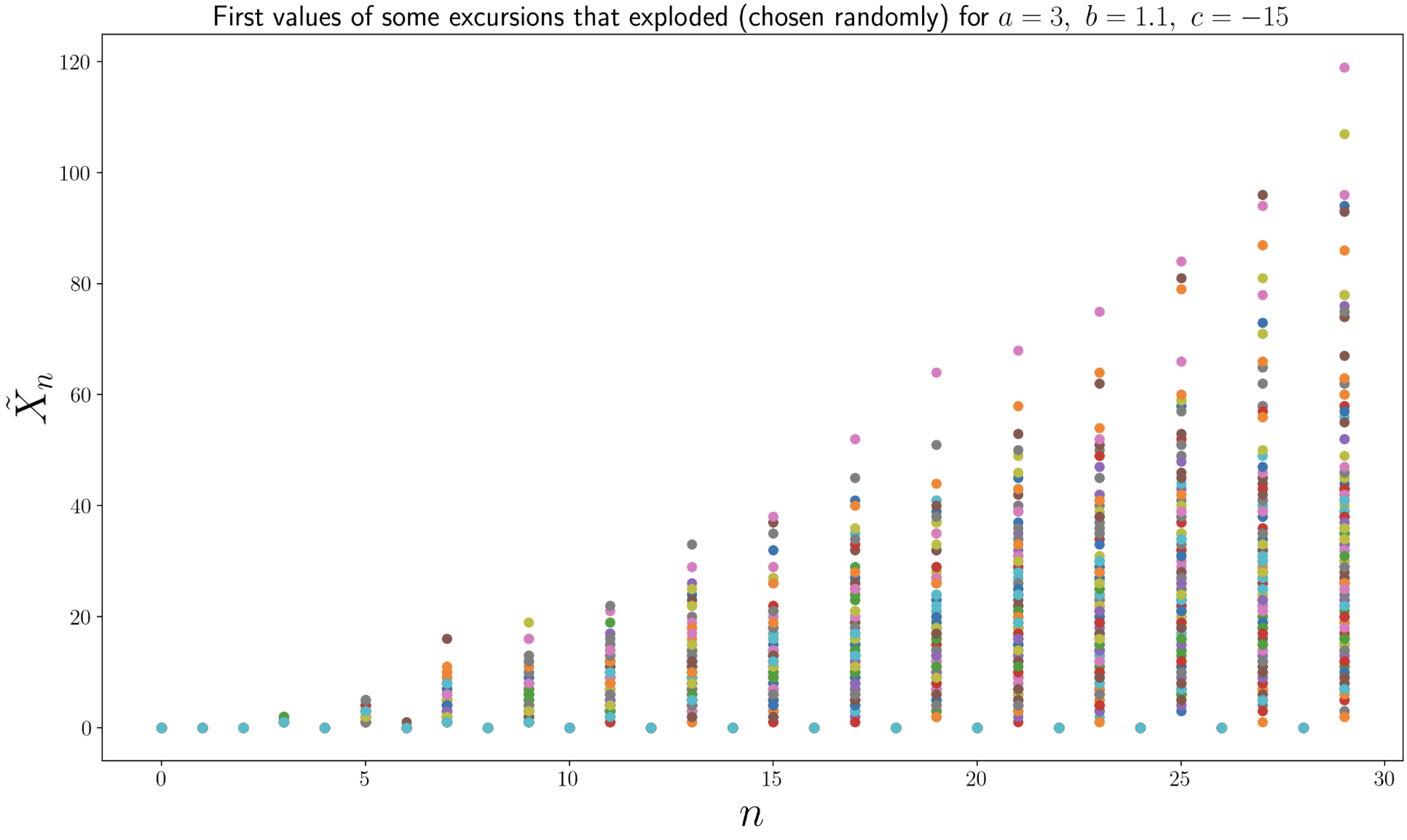}
\caption{Plotting the first 30 values of exploding trajectories, with parameters $a=3, b=1.1, c=-15$. The trajectories all follow a similar pattern, alternating between 0 and positive numbers after a certain time. We remark that this phenomenon is observable for any value of $b>1$.}
\label{ExplodingExcursions}
\end{figure}

On Figure \ref{ExplodingExcursions}, we observe that after $n=6$, each excursion behaves as described: at all even time steps, $\Tilde X_n = 0$ while at all odd time steps, $\Tilde X_n > 0$. We draw the reader's attention to the fact that this type of state $(0,x,0)$ where $x > 0$, precisely forms the sets $S_n$ in the proof of Proposition \ref{thm:transience} for the case $b>1, ab+c < 0$ (see section \ref{SectionTransienceb}). This partially addresses the remark at the end of this section : in practice, we do observe the Markov chain $(X_n)$ reaching these states, although, as seen in Figure \ref{ExplodingProportion}, this rarely occurs.

\section{Recurrence: proofs of Theorems~\ref{thm:recurrencePHDp} and \ref{thm:RecurrenceDisc}}

\subsection{Lyapunov functions and drift criterion}
\label{sec:Lyap}
Drift criteria, initially introduced by Foster \cite{foster_stochastic_1953}, and extensively studied and popularized by Meyn and Tweedie \cite{meyn_markov_2009}, and others, constitute powerful analytical tools. These criteria play a pivotal role in establishing convergence to the invariant measure of Markov chains and determining explicit rates of convergence. We follow the book by Douc, Moulines, Priouret, and Soulier \cite{dmps_markov_chains}.
%, which draws inspiration from Meyn and Tweedie's work but is particularly suitable for Markov chains that are irreducible but not strongly irreducible. 
We begin with the following definitions.

\begin{definition}
\label{DefinitionPetiteSet}
A subset $C\subset \N^p$ is called \emph{petite} \cite[Definition~9.4.1]{dmps_markov_chains}, if there exists a state $x_0\in \N^3$ and a probability distribution $(p_n)_{n\in\N}$ on $\N$ such that
 \[
 \inf_{x\in C}\sum_{n\in\N} p_n P^n(x,x_0) > 0,
 \]
 where $P^n(x,x_0)$ represents the $n$-step transition probability from state $x$ to $x_0$. 
\end{definition}

Recall that, due to the irreducibility of the Markov chain $(X_n)_{n\ge0}$, any finite set is petite (with $x_0$ being the accessible state) and a finite union of petite sets remains petite \cite[Proposition~9.4.5]{dmps_markov_chains}.

\begin{definition}
\label{def:drift_condition}
Let $V : \N^p \to [1,\infty)$ be a function, $\varepsilon\in(0,1]$, $K<\infty$ and $C\subset \N^3$. We say that the \emph{drift condition} $D(V,\varepsilon,K,C)$ is satisfied if
\begin{equation}
\label{Drift Condition}
\forall x \in \N^p, \quad \Delta V(x) := \mathbb{E}_x [V(X_1)-V(X_0)] \leq -\varepsilon V(x)+K\boldsymbol 1_C(x).
\end{equation}
\end{definition}
It is easy to see that \eqref{Drift Condition} implies condition $D_g(V,\lambda,b,C)$ from \cite[Definition~14.1.5]{dmps_markov_chains}, with $\lambda = 1-\varepsilon$ and $b = K$. A function $V$ satisfying a drift condition $D(V,\varepsilon,K,C)$ for some $\varepsilon\in (0,1]$, $K<\infty$ and a petite set $C$ is also called a \emph{Lyapunov function}.

Note that we are in a setting where the Markov chain $(X_n)$ is aperiodic and irreducible. Indeed, since the probability that a Poisson random variable is zero is strictly positive, it is possible to reach the state $\boldsymbol 0 = (0,\ldots,0)$ with positive probability from any state in $p$ steps. In particular, the state $\boldsymbol 0$ is accessible and the Markov chain is irreducible. Furthermore, the Markov chain is aperiodic \cite[Section~7.4]{dmps_markov_chains}, since $P(\boldsymbol 0,\boldsymbol 0) = e^{-\lambda} >  0$. For this setting, in \cite[Theorem~9.4.10]{dmps_markov_chains} it is proven that the notions of \textit{petite} and \textit{small} sets are equivalent. We will thus use the simpler definition of \textit{small} sets, that is, a set $C \subset \N^p$ such that there exists $x_0 \in \N^p$ and $n \in \N$ satisfying :
 \[
 \inf_{x\in C} P^n(x,x_0) > 0.
 \]
 
The following proposition is an easy consequence of results from \cite{dmps_markov_chains}, see \cite[Proposition 3.1]{Costa_Maillard_Muraro_2024}.

\begin{prop}
\label{prop:Lyapounov}
Assume that the drift condition $D(V,\varepsilon,K,C)$ is verified for some $V$, $\varepsilon$, $K$ and $C$ as above and assume that $C$ is petite. Then the Markov chain $(X_n)$ is geometrically ergodic.
\end{prop}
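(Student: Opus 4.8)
The plan is to deduce the statement directly from the general $V$-geometric ergodicity theory for $\psi$-irreducible aperiodic Markov chains developed in \cite{dmps_markov_chains}; almost all the work has in fact already been done in setting up the framework of Section~\ref{sec:Lyap}, so the proof amounts to lining up the hypotheses and quoting the right theorem (see also \cite[Proposition~3.1]{Costa_Maillard_Muraro_2024}, where exactly this reduction is carried out).

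First I would rewrite the drift condition in its standard form: inequality \eqref{Drift Condition} is equivalent to
\[
PV(x) \;\le\; (1-\varepsilon)\, V(x) + K\, \boldsymbol 1_C(x), \qquad x \in \N^p,
\]
which is precisely the geometric drift condition $D_g(V,\,1-\varepsilon,\,K,\,C)$ of \cite[Definition~14.1.5]{dmps_markov_chains}, as already noted just after Definition~\ref{def:drift_condition}. Here $V:\N^p\to[1,\infty)$ is finite-valued, $1-\varepsilon\in[0,1)$ and $K<\infty$. Next I would recall from Section~\ref{sec:Lyap} that $(X_n)_{n\ge0}$ is $\psi$-irreducible (the state $\boldsymbol 0$ is an accessible atom) and aperiodic (since $P(\boldsymbol 0,\boldsymbol 0)=e^{-\lambda}>0$), and that in this setting the petite set $C$ is in fact small \cite[Theorem~9.4.10]{dmps_markov_chains}. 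With these ingredients the $V$-geometric ergodicity theorem of \cite[Chapter~15]{dmps_markov_chains} applies: the chain admits a unique invariant probability measure $\pi$ with $\pi(V)<\infty$, and there exist $\rho\in(0,1)$ and $R<\infty$ such that
\[
\bigl\|P^n(x,\cdot)-\pi\bigr\|_{\mathrm{TV}} \;\le\; R\, V(x)\, \rho^n \qquad \text{for all } x\in\N^p,\ n\ge0.
\]
Since $V\ge 1$ is everywhere finite this is a genuine exponential bound, so the law of $X_n$ converges to $\pi$ exponentially fast in total variation, i.e.\ $(X_n)_{n\ge0}$ is geometrically ergodic in the sense of Section~\ref{subsec:setting}.

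There is no real obstacle here; the only points requiring (minimal) care are the bookkeeping ones: matching the constraint $\varepsilon\in(0,1]$ of Definition~\ref{def:drift_condition} with the requirement $1-\varepsilon\in[0,1)$ of $D_g$; using the equivalence of petite and small sets, valid here because the chain is irreducible and aperiodic \cite[Theorem~9.4.10]{dmps_markov_chains}; and noting that positive recurrence — the existence of $\pi$ — is itself a consequence of a drift condition towards a petite set (cf.\ \cite[Chapter~11]{dmps_markov_chains}) and is subsumed in the geometric ergodicity conclusion. Thus the proof is a one-line invocation of the cited theorem once the already-verified hypotheses are assembled.
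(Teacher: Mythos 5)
Your proposal is correct and follows essentially the same route as the paper, which proves this proposition by noting that $D(V,\varepsilon,K,C)$ is the geometric drift condition $D_g(V,1-\varepsilon,K,C)$ of \cite[Definition~14.1.5]{dmps_markov_chains} and then invoking the $V$-geometric ergodicity theory of \cite{dmps_markov_chains} together with the irreducibility/aperiodicity and petite-equals-small facts from Section~\ref{sec:Lyap}, as in \cite[Proposition~3.1]{Costa_Maillard_Muraro_2024}. No gaps; your write-up just makes explicit the reduction the paper delegates to the cited references.
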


\subsection{Proof of Theorem \ref{thm:recurrencePHDp}}
\label{SectionRecurrencep}

%We begin by providing a typical trajectory of the process for the specific case $p=3$ when the parameters $a, b$ and $c$ satisfy the assumptions of Theorem \ref{thm:recurrencePHDp} (see Figure \ref{TrajectoryLinear}). We direct the reader's attention to the preceding article \cite{Costa_Maillard_Muraro_2024}, where one may discern a striking resemblance in the numerical representations within the comparable two-parameter context. The observed similarity in typical trajectory between 2 and 3-parameter discrete-time Hawkes processes remains applicable across the different cases examined within the scope of this article.

%\begin{figure}[!h]
%    \begin{minipage}[c]{0.46\linewidth}
%        \centering
%        \includegraphics[scale=0.29]{}
%    \end{minipage}
%    \hfill%
%    \begin{minipage}[c]{0.46\linewidth}
%        \centering
%        \includegraphics[scale=0.3]{}
%    \end{minipage}
%    \caption{Typical trajectory for $(X_n)$ when parameters $a,b,c$ satisfy assumptions of Theorem \ref{thm:recurrencePHDp}. In this figure, the parameters chosen are $a=0.5, b=0.3, c=0.1$.}
%    \label{TrajectoryLinear}
%\end{figure}

Recall that $p \in \N$ and that the Markov chain $(X_n)$ is defined by 
$$X_n=(\Tilde X_n, \Tilde{X}_{n-1}, \dots, \Tilde X_{n-p+1}), $$
where, knowing $\Big\{\Tilde X_n = x_1, \dots, \Tilde X_{n-p+1} = x_p \Big\}$, $~\Tilde X_{n+1}$ is distributed as a Poisson random variable with parameter 
\begin{equation}
\label{DefinitionParameterS}
\mathbf{s} := (a_1x_1 + \dots + a_px_p  + \lambda)_+.
\end{equation}
%Let us denote by $A$ the subset of $\N^p$ defined by :
%$$A = \{(x_1, \dots, x_p) \in \N^p ~|~ a_1x_1 + \dots + a_px_p + \lambda \leq 0 \}.$$

Theorem~\ref{thm:recurrencePHDp} is a direct consequence of Proposition \ref{prop:Lyapounov}, together with the following lemma:
\begin{lemma}
If $(a_1)_+ + \dots + (a_p)_+ < 1$, then there exist $\alpha_1, \dots, \alpha_p > 0$ such that the function $V : \N^p \to [1;+\infty)$ defined by
$$V(x_1, \dots, x_p) := \alpha_1 x_1 + \dots + \alpha_p x_p + 1,$$ 
satisfies the drift condition $D(V,\varepsilon,K,C)$ with some $\varepsilon\in (0,1]$, $K<\infty$ and a finite set $C\subset \N^3$.
\end{lemma}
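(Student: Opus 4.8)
The plan is to construct a linear Lyapunov function $V(x_1,\dots,x_p) = \alpha_1 x_1 + \cdots + \alpha_p x_p + 1$ and show that for a suitable choice of positive weights $\alpha_i$, the drift $\Delta V(x)$ becomes negative outside a finite set. First I would compute the drift explicitly. Given $X_0 = (x_1,\dots,x_p)$, the next state is $X_1 = (\ell, x_1,\dots,x_{p-1})$ where $\E_x[\ell] = \mathbf{s} = (a_1 x_1 + \cdots + a_p x_p + \lambda)_+$. Hence
\[
\Delta V(x) = \alpha_1 \mathbf{s} + \alpha_2 x_1 + \cdots + \alpha_p x_{p-1} - \alpha_1 x_1 - \cdots - \alpha_p x_p = \alpha_1 \mathbf{s} + \sum_{i=1}^{p-1}(\alpha_{i+1}-\alpha_i)x_i - \alpha_p x_p.
\]
The key estimate is the bound $\mathbf{s} = (a_1 x_1 + \cdots + a_p x_p + \lambda)_+ \le (a_1)_+ x_1 + \cdots + (a_p)_+ x_p + \lambda$, valid since $x_i \ge 0$ and $(\cdot)_+$ is subadditive and monotone. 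Substituting, one gets
\[
\Delta V(x) \le \alpha_1\lambda + \sum_{i=1}^{p-1}\bigl(\alpha_1 (a_i)_+ + \alpha_{i+1} - \alpha_i\bigr)x_i + \bigl(\alpha_1 (a_p)_+ - \alpha_p\bigr)x_p.
\]

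Next I would choose the weights so that the coefficient of each $x_i$ is bounded above by $-\varepsilon \alpha_i$ for a small $\varepsilon > 0$; since $V(x) = \sum \alpha_i x_i + 1$, this gives $\Delta V(x) \le -\varepsilon(V(x)-1) + \alpha_1\lambda \le -\varepsilon V(x) + (\varepsilon + \alpha_1\lambda)$, and the drift condition $D(V,\varepsilon,K,C)$ holds with $K = \varepsilon + \alpha_1\lambda$ and $C$ any finite set large enough to absorb the constant (e.g. $C = \{0\}$ works after adjusting, or more carefully $C$ is the finite set where $\Delta V(x)$ fails to be $\le -\varepsilon V(x)$, which is finite precisely because the linear part has strictly negative coefficients). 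So the crux is to find $\alpha_1,\dots,\alpha_p > 0$ and $\varepsilon > 0$ with
\[
\alpha_1 (a_i)_+ + \alpha_{i+1} \le (1-\varepsilon)\alpha_i \quad (1 \le i \le p-1), \qquad \alpha_1 (a_p)_+ \le (1-\varepsilon)\alpha_p.
\]
Writing $s := (a_1)_+ + \cdots + (a_p)_+ < 1$, a natural ansatz is to work backwards: set $\alpha_p$ first, then define $\alpha_i := \alpha_{i+1} + \alpha_1 (a_i)_+/(1-\varepsilon) + \cdots$ — but this is circular since $\alpha_1$ appears on the right. Instead I would guess $\alpha_i = \alpha_1\bigl((a_i)_+ + (a_{i+1})_+ + \cdots + (a_p)_+\bigr)/(1-\varepsilon) \cdot(\text{something})$; more cleanly, try $\alpha_i = (a_i)_+ + (a_{i+1})_+ + \cdots + (a_p)_+ + \delta$ for a small $\delta>0$, normalize $\alpha_1$, and check the inequalities reduce to $s + \delta(\text{const}) < 1$ for $\varepsilon$ small, which holds by $s<1$. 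One verifies $\alpha_1 = s + \delta \le 1$ and the telescoping works out, with $\alpha_i - \alpha_{i+1} = (a_i)_+$, so the $i$-th inequality becomes $\alpha_1(a_i)_+ + \alpha_{i+1} \le \alpha_{i+1} + (a_i)_+$, i.e. $\alpha_1 (a_i)_+ \le (a_i)_+$, i.e. $\alpha_1 \le 1$ when $(a_i)_+ > 0$ — true. Then a perturbation argument introduces the strict $\varepsilon$: replace $\delta$-terms to create slack.

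The main obstacle I anticipate is getting the constants exactly right so that \emph{strict} inequalities with a uniform $\varepsilon>0$ hold simultaneously for all $p$ coordinates; the borderline case $\alpha_1 = 1$ (which occurs if $\delta = 0$) gives only non-strict inequalities, so one must introduce the perturbation carefully — e.g. take $\alpha_i = (a_i)_+ + \cdots + (a_p)_+ + (p-i+1)\delta$ with $\delta$ chosen after fixing a target $\varepsilon$, then verify $\alpha_1 = s + p\delta$, and the inequalities become, after division, conditions of the form $(s+p\delta)(a_i)_+ + \text{[linear in }\delta] \le (1-\varepsilon)[(a_i)_+ + \cdots]$; choosing $\varepsilon$ and $\delta$ small depending on $1-s > 0$ makes everything work. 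The rest — deducing the drift condition and identifying the finite set $C$ (finiteness follows since every $x_i$-coefficient in the bound for $\Delta V$ is $\le -\varepsilon\alpha_i < 0$, so $\Delta V(x) \to -\infty$ as $|x|\to\infty$ and exceeds $-\varepsilon V(x)$ only on a bounded, hence finite, subset of $\N^p$) — is routine. Finally, Theorem~\ref{thm:recurrencePHDp} follows by combining this lemma with Proposition~\ref{prop:Lyapounov}, since any finite set is petite.
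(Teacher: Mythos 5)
Your proposal is correct and takes essentially the same route as the paper's proof: the paper uses the linear Lyapunov function with weights $\alpha_i = \eta\frac{p-i+1}{p} + \sum_{j=i}^p (a_j)_+$ (i.e.\ exactly your ansatz $\alpha_i = (a_i)_+ + \cdots + (a_p)_+ + (p-i+1)\delta$ with $\delta = \eta/p$, giving $\alpha_1 = 1$), the same bound $\mathbf{s} \le (a_1)_+x_1 + \cdots + (a_p)_+x_p + \lambda$, and the same conclusion that each coefficient of $x_i$ in $\Delta V + \varepsilon V$ is strictly negative for $\varepsilon < \eta/p$, so the exceptional set $C$ is finite. Just make sure, as the paper does, that the drift parameter $\varepsilon$ is chosen strictly below the slack $\delta$ in your coefficient inequalities, since with equality ($\Delta V$-coefficients exactly $-\varepsilon\alpha_i$) the set where $\Delta V + \varepsilon V > 0$ need not be finite.
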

\begin{proof}
Set $\eta \coloneqq 1 - (a_1)_+ - \dots - (a_p)_+> 0$. Define for $1 \le i\le p$
\[
\alpha_i \coloneqq \eta\,\frac{p-i+1}{p} +\sum_{j=i}^p (a_j)_+  >0,
\]
 and \[\alpha_{p+1}:=0,\]
and note that $\alpha_1 = 1$ by definition. Let $V$ be defined as in the statement of the lemma---we claim that it satisfies its conclusion.

Let $\varepsilon > 0$ to be properly chosen later. Then, for all $\vec{x} = (x_1, \dots, x_p) \in \N^p$, we have, from the linearity of $V$ and $\alpha_1=1$
\begin{align*}
\Delta V(\vec{x}) + \varepsilon V(\vec{x}) &= \mathbb{E}_{\vec{x}} [V(X_1)-V(X_0)] + \varepsilon V(\vec{x}) \\ 
%&=\sum_{\ell = 0}^{+\infty} \dfrac{e^{-\mathbf{s}} \mathbf{s}^\ell}{\ell !} V(\ell, x_1, \dots, x_{p-1}) + (\varepsilon-1)V(x_1, \dots, x_p) \nonumber \\
&= \alpha_1 \E(\tilde{X_1}\lvert X_0=\vec{x}) + \sum_{i=1}^{p}(\alpha_{i+1} + \alpha_i (\varepsilon - 1)) x_i + \varepsilon.\\
&= \mathbf{s} + \sum_{i=1}^{p}(\alpha_{i+1} + \alpha_i (\varepsilon - 1)) x_i + \varepsilon.
\end{align*}
Using that $\mathbf{s} = (a_1x_1+\cdots+a_px_p+\lambda)_+ \le (a_1)_+x_1 + \cdots + (a_p)_+x_p + \lambda$, we get
\begin{align}
\label{CalculDeltaVp}
\Delta V(\vec{x}) + \varepsilon V(\vec{x}) \le \sum_{i=1}^{p}((a_i)_+ + \alpha_{i+1} + \alpha_i (\varepsilon - 1)) x_i + \varepsilon + \lambda.
\end{align}
The right-hand side of \eqref{CalculDeltaVp} is an affine function of $x_1,\ldots,x_p$. Furthermore, by definition of $\alpha_i$, we have $(a_i)_+ + \alpha_{i+1} - \alpha_i = -\eta/p$, furthermore, $\alpha_i \le \alpha_1 = 1$ for every $i$. Hence, choosing $\varepsilon < \eta/p$, we have that every coefficient in front of the $x_i$'s on the right-hand side of \eqref{CalculDeltaVp} is (strictly) negative. It follows that the set $C \coloneqq \{\vec{x}\in \N^p: \Delta V(\vec{x}) + \varepsilon V(\vec{x}) > 0\}$ is finite. This shows that the drift condition $D(V,\varepsilon, K, C)$ is satisfied, with $K \coloneqq \max_{\vec{x}\in C} \E_{\vec{x}}[V(X_1)-V(X_0)]$, which is finite since $C$ is finite. This proves the lemma.
\end{proof}

\subsection{Proof of Theorem \ref{thm:RecurrenceDisc}}
\label{SectionRecurrenceDisc}

In this section, we are in the setting of Theorem~\ref{thm:RecurrenceDisc}, in particular, we have $p=3$ and we write $a = a_1$, $b=a_2$ and $c=a_3$ the parameters of the process.

%\subsubsection*{Quadratic forms.}
%To begin with,  we provide a brief review of fundamental definitions within the theory of quadratic forms.

\begin{figure}[!h]
    \begin{minipage}[c]{0.46\linewidth}
        \centering
        \includegraphics[scale=0.25]{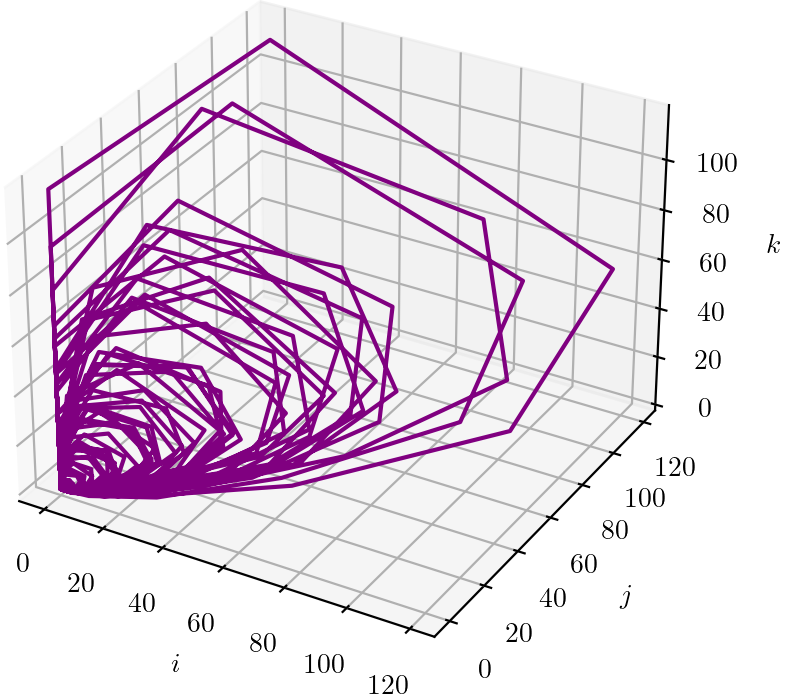}
    \end{minipage}
    \hfill%
    \begin{minipage}[c]{0.46\linewidth}
        \centering
        \includegraphics[scale=0.25]{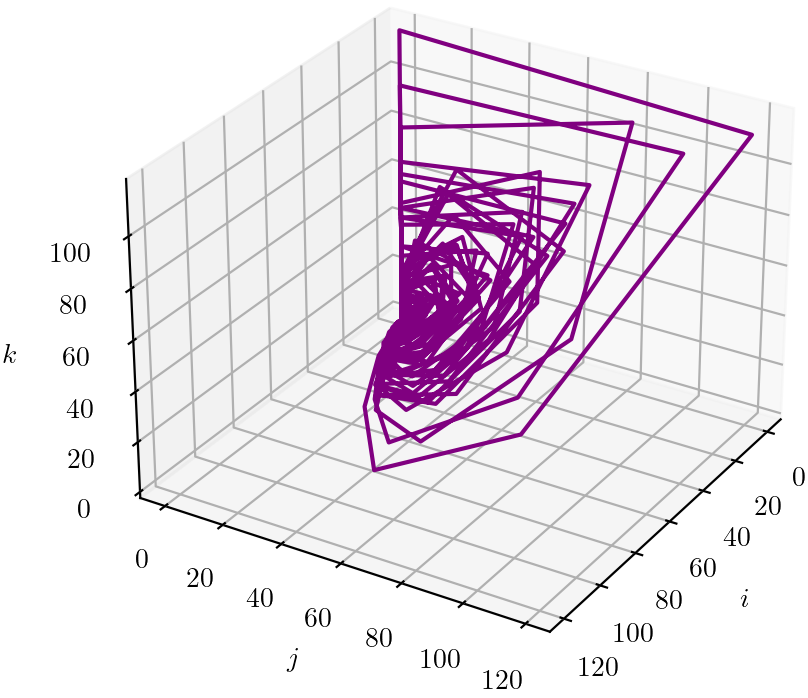}
    \end{minipage}
    \caption{Typical trajectory of $(X_n)$, for parameters satisfying assumptions of Theorem \ref{thm:RecurrenceDisc}. Here, the parameters are $a = 2.5, b = -1, c = -3$. For these parameters, we have $\Disc(P) = -188.25$.}
\end{figure}

\subsubsection*{Construction of a small set.}
In order to use Proposition \ref{prop:Lyapounov} we have to identify a small set, and prove we can build an appropriate Lyapunov function. We have the following lemma.
\begin{lemma}
Let us define the following set of states: 
$$A := \{(i,j,k) \in \N^3, ~ ai + bj + ck + \lambda \leq 0\}.$$
Assuming that $b,c\le 0$, then $A$ is a \textit{small} set.
\label{AisSmall}
\end{lemma}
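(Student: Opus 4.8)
The plan is to show that $A$ is a small set by verifying the definition directly: we need to exhibit a state $x_0 \in \N^3$ and an integer $n$ such that $\inf_{x \in A} P^n(x, x_0) > 0$. The natural candidate is $x_0 = \boldsymbol{0} = (0,0,0)$ and $n = 3$, since from any state $(i,j,k) \in A$ the parameter $s_{i,j,k} = (ai+bj+ck+\lambda)_+ = 0$ by definition of $A$, so the next coordinate drawn is $0$ with probability $e^{-0} = 1$.

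**First I would** trace the three steps of the chain starting from an arbitrary $x = (i,j,k) \in A$. By definition of $A$, $ai+bj+ck+\lambda \le 0$, so $s_x = 0$ and hence $X_1 = (0, i, j)$ with probability $1$. Now I need $X_2$ to have first coordinate $0$: this requires the Poisson parameter at state $(0,i,j)$, namely $s_{(0,i,j)} = (b i + c j + \lambda)_+$, to be... well, not necessarily zero, but I need $\P(X_2 = (0,0,i)) > 0$ uniformly, i.e. $e^{-s_{(0,i,j)}} > 0$, which holds as long as $s_{(0,i,j)} < \infty$ — but I need it bounded \emph{uniformly} over $x \in A$. Here is where $b,c \le 0$ enters: since $b,c \le 0$ and $i,j \ge 0$, we have $bi + cj + \lambda \le \lambda$, so $s_{(0,i,j)} = (bi+cj+\lambda)_+ \le \lambda$, giving $\P(X_2 = (0,0,i) \mid X_1 = (0,i,j)) = e^{-s_{(0,i,j)}} \ge e^{-\lambda} > 0$. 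Similarly, from $(0,0,i)$ the Poisson parameter is $s_{(0,0,i)} = (ci+\lambda)_+ \le \lambda$ (again using $c \le 0$), so $\P(X_3 = (0,0,0) \mid X_2 = (0,0,i)) = e^{-s_{(0,0,i)}} \ge e^{-\lambda} > 0$.

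**Combining these** via the Markov property gives, for every $x \in A$,
\[
P^3(x, \boldsymbol 0) \ge 1 \cdot e^{-\lambda} \cdot e^{-\lambda} = e^{-2\lambda} > 0,
\]
so $\inf_{x \in A} P^3(x, \boldsymbol 0) \ge e^{-2\lambda} > 0$, which is exactly the defining inequality for a small set with $x_0 = \boldsymbol 0$ and $n = 3$.

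**The only subtlety** — and the one point worth being careful about — is making sure the uniform lower bound really is uniform, i.e. that nothing blows up as $i, j$ get large in $A$. This is precisely what $b, c \le 0$ guarantees: once the first coordinate has been reset to $0$, the subsequent Poisson parameters depend on the old (possibly large) coordinates only through $b$ and $c$, which being nonpositive can only \emph{decrease} the parameter below $\lambda$, so the probability of drawing another $0$ is bounded below by $e^{-\lambda}$ regardless of how large the state in $A$ was. I do not expect any real obstacle here; the argument is essentially a three-step computation, and the role of the hypothesis $b, c \le 0$ is exactly to control the intermediate Poisson parameters. (Note the hypothesis does not require $A$ to be finite or even nonempty — if $A = \emptyset$ the statement is vacuous, and one may simply assume $A \ne \emptyset$ for the argument above to have content.)
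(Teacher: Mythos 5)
Your proof is correct and follows essentially the same route as the paper's: three steps to $(0,0,0)$, using that $s_{ijk}=0$ on $A$ forces the first step deterministically, and that $b,c\le 0$ caps the intermediate Poisson parameters at $\lambda$, yielding the uniform bound $P^3(x,\boldsymbol 0)\ge e^{-2\lambda}$. Nothing further is needed.
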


\begin{proof}
By definition of $A$, we have $s_{ijk} = 0$ for all $(i,j,k)\in A$, hence $P((i,j,k),(0,i,j)) = 1$. Furthermore, for every $i\in \N$, since $b,c\le 0$, we have
\[
P((0,i,j),(0,0,i)) = e^{-s_{0ij}} = e^{-(\lambda+bi+cj)_+} \ge e^{-\lambda}.
\]
And since $c\le 0$,
\[
P((0,0,i),(0,0,0)) = e^{-s_{00i}} = e^{-(\lambda+ci)_+} \ge e^{-\lambda}.
\]
It follows that 
$$
\inf_{(i,j,k)\in A} P^3((i,j,k),(0,0,0)) \ge e^{-2\lambda} > 0,
$$
which shows that $A$ is \textit{small}.
\end{proof}

\subsubsection*{Construction of a Lyapunov function.}

For $\alpha \ge 0$, define $V_\alpha : \N^3 \to [1;+\infty)$ as follows:
\begin{equation}
V_\alpha(i,j,k) := \dfrac{i + \alpha j}{j + \alpha k + 1} + 1.
\label{functionV}
\end{equation}

We will prove the following:
\begin{lemma}
Under the hypotheses of Theorem~\ref{thm:RecurrenceDisc}, there exists $\alpha > 0$, $\varepsilon>0$, $K<\infty$ and a finite set $B\subset \N^3$, such that the function $V_\alpha$ defined by \eqref{functionV} satisfies the drift condition $D(V_\alpha,\varepsilon,K,C),$ with $C = A \cup B$.
\label{VisLyapunov}
\end{lemma}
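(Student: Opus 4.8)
The plan is to verify the drift condition of Definition~\ref{def:drift_condition} for $V_\alpha$ and then apply Proposition~\ref{prop:Lyapounov}. Writing $X_0=(i,j,k)$ and $X_1=(\ell,i,j)$ with $\ell\sim\mathcal P(s_{ijk})$, linearity of the conditional expectation gives
\[
\Delta V_\alpha(i,j,k)=\frac{s_{ijk}+\alpha i}{i+\alpha j+1}-\frac{i+\alpha j}{j+\alpha k+1}.
\]
On $A$ one has $s_{ijk}=0$, so $\Delta V_\alpha(i,j,k)\le\frac{\alpha i}{i+\alpha j+1}\le\alpha$ and thus $\Delta V_\alpha+\varepsilon V_\alpha\le\alpha+\varepsilon$ there; since $A\subset C$ and $A$ is small by Lemma~\ref{AisSmall}, this costs nothing, and all the work is on $A^c$, where $s_{ijk}=ai+bj+ck+\lambda$. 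We may assume $a>0$: if $a\le0$ then $(a)_++(b)_++(c)_+=0<1$, so Theorem~\ref{thm:recurrencePHDp} already applies (and in fact $V_\alpha$ works for any $\alpha>0$, the cone $\mathcal C$ below degenerating to a point or half-line and the drift being negative at infinity by inspection).

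On $A^c$ I would multiply $\Delta V_\alpha+\varepsilon V_\alpha$ by the positive factor $(i+\alpha j+1)(j+\alpha k+1)$ and isolate the degree-two part, obtaining
\[
\bigl(\Delta V_\alpha+\varepsilon V_\alpha\bigr)(i+\alpha j+1)(j+\alpha k+1)=-\Psi_\varepsilon(i,j,k)+\varepsilon(i+\alpha j)(j+\alpha k)+O(\lvert(i,j,k)\rvert),
\]
where $\Psi_\delta(i,j,k):=(1-\delta)(i+\alpha j)^2-\bigl((a+\alpha)i+bj+ck\bigr)(j+\alpha k)$. The heart of the matter is the claim that there exist $\alpha>0$ and $\delta_0>0$ with $\Psi_0(i,j,k)\ge\delta_0\lvert(i,j,k)\rvert^2$ on the closed polyhedral cone $\mathcal C:=\{(i,j,k)\in\R^3_{\ge0}:ai+bj+ck\ge0\}$. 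Granting it, compactness of $\mathcal C\cap\{i^2+j^2+k^2=1\}$ shows $\Psi_0>0$ still holds on a slightly larger cone $\mathcal C_\eta:=\{(i,j,k)\in\R^3_{\ge0}:ai+bj+ck\ge-\eta(i+j+k)\}$; choosing $\varepsilon$ small enough that $\Psi_\varepsilon\ge\tfrac12\delta_0\lvert\cdot\rvert^2$ and $\varepsilon(i+\alpha j)(j+\alpha k)\le\tfrac14\delta_0\lvert\cdot\rvert^2$ on $\mathcal C_\eta$, the right-hand side above is $\le-\tfrac14\delta_0\lvert(i,j,k)\rvert^2+O(\lvert(i,j,k)\rvert)<0$ once $\lvert(i,j,k)\rvert$ is large. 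Since every state of $A^c$ with $i+j+k>\lambda/\eta$ lies in $\mathcal C_\eta$ (as $ai+bj+ck>-\lambda$), the set $B:=\{(i,j,k)\in A^c:(\Delta V_\alpha+\varepsilon V_\alpha)(i,j,k)>0\}$ is finite, and $D(V_\alpha,\varepsilon,K,A\cup B)$ holds with $K=\alpha+\varepsilon+\max_B(\Delta V_\alpha+\varepsilon V_\alpha)$; $A\cup B$ is petite as a union of a small set and a finite set.

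It remains to prove the claim, and this is where $\Disc(P)<0$ enters. Since $a>0$ and $b,c<0$, the cone $\mathcal C$ is simplicial, generated by its extreme rays $r_1=(1,0,0)$, $r_2=(-c,0,a)$, $r_3=(-b,a,0)$, the last two lying on the plane $\{ai+bj+ck=0\}$; hence $\Psi_0>0$ on $\mathcal C$ reduces to copositivity of the symmetric matrix $\bigl(B_{\Psi_0}(r_m,r_n)\bigr)_{m,n}$ built from the polar form, equivalently to $\Psi_0>0$ on each of the three two-dimensional faces of $\mathcal C$ and in its interior. A short computation gives $\Psi_0(r_1)=1$, $\Psi_0(r_2)=\lvert c\rvert(\lvert c\rvert-\alpha^2 a)$, $\Psi_0(r_3)=b^2+\alpha a\lvert b\rvert+\alpha^2 a^2$, while the restrictions to the faces $\{j=0\}$ and $\{k=0\}$ are one-variable quadratic minimizations, positive provided respectively $\alpha(a+\alpha)^2<4\lvert c\rvert$ and a condition that is automatic when $a^2+4b\le0$ but forces $\alpha>\alpha_0:=\tfrac13\bigl(2\sqrt{a^2+3b}-a\bigr)$ when $a^2+4b>0$. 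If $a^2+4b\le0$, the $\alpha\to0$ limit $i^2-aij+\lvert b\rvert j^2+\lvert c\rvert jk$ of $\Psi_0$ is positive on $\mathcal C\setminus\{0\}$ (at fixed $j$ its discriminant is $(a^2+4b)j^2\le0$), so the claim holds with $\alpha$ small positive for any $c<0$. If $a^2+4b>0$ (hence $a^2+3b>0$), the $\{k=0\}$-face gives the $c$-independent lower bound $\alpha>\alpha_0>0$, whereas the $\{j=0\}$- and $\{ai+bj+ck=0\}$-faces together with $\Psi_0(r_2)>0$ cap $\alpha$ by quantities of order $\lvert c\rvert^{1/2}$; by Lemma~\ref{lem_discP}, since $a^2+3b\ge0$ and $c<0$ the hypothesis $\Disc(P)<0$ forces $c<c_-<0$, which is a lower bound on $\lvert c\rvert$ one checks is large enough for an admissible $\alpha$ to exist, after which the interior inequality is verified for that $\alpha$.

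The step I expect to be the real obstacle is precisely this last case $a^2+4b>0$: reconciling the lower bound $\alpha>\alpha_0$ coming from the $\{k=0\}$ face with the $\lvert c\rvert$-dependent upper bounds from the other faces, and then checking the remaining interior inequality for the chosen $\alpha$. Concretely this reduces to a finite list of explicit polynomial inequalities in $a,b,c$, whose only non-routine ingredient is the quantitative translation, via Lemma~\ref{lem_discP}, of $\Disc(P)<0$ into the needed bound on $\lvert c\rvert$; everything else (Steps 1--2) is soft.
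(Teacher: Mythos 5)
Your overall architecture coincides with the paper's: compute $\Delta V_\alpha$, isolate the homogeneous degree-two part $q=-\Psi_0$ of the numerator, show it is strictly negative on the relevant cone, conclude by compactness/homogeneity that only finitely many states of $A^c$ violate the drift inequality, and handle $A$ separately via Lemma~\ref{AisSmall}. That outer shell is fine (and your observation that one only needs negativity of $q$ on the subcone $\mathcal C_\eta\subset\R_+^3$ rather than on all of $\R_+^3$ is a legitimate, slightly weaker requirement). The problem is that the entire substance of the lemma lies in producing an admissible $\alpha>0$, and this is precisely what you do not do. In the case $a^2+4b\le 0$ your argument is essentially complete (and indeed there $\Disc(P)<0$ is automatic by Lemma~\ref{lem_discP}, so nothing deep is happening). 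But in the case $a^2+4b>0$ --- the only case in which the hypothesis $\Disc(P)<0$ carries information --- you reduce to ``a finite list of explicit polynomial inequalities'' reconciling the lower bound $\alpha>\alpha_0=\tfrac13(2\sqrt{a^2+3b}-a)$ from the $\{k=0\}$ face with $|c|$-dependent upper bounds from the other faces, plus an unspecified interior/copositivity check, and you explicitly leave all of this unverified. That is not a routine omission: it is exactly the point where $\Disc(P)<0$ must be converted into a quantitative statement about $a,b,c$, and without it the claim $\Psi_0>0$ on $\mathcal C\setminus\{0\}$ is unproven.

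For comparison, the paper closes this gap with an algebraic device rather than a face-by-face copositivity analysis: it takes $\alpha=\alpha_Q$, the unique real root of $Q(X)=-P(-X)=X^3+aX^2-bX+c$ (unique because $\Disc(Q)=\Disc(P)<0$, and positive because $Q(0)=c<0$). This choice makes $\det M_\alpha=\tfrac14 Q(\alpha)^2$ vanish, so by the Gauss reduction $q$ is negative \emph{semi}definite on all of $\R^3$ as soon as the $2\times 2$ minor $R(\alpha_Q)$ is positive; the inequality $R(\alpha_Q)>0$ is exactly your ``$\alpha>\alpha_0$'' condition, and it is established in Lemma~\ref{lem:R(alpha_Q)} by showing $Q(\alpha_+)<0$, which is where Lemma~\ref{lem_discP} translates $\Disc(P)<0$ into the bound $c<c_-$. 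One then checks that the one-dimensional isotropic set of $q$ misses $\R_+^3\setminus\{0\}$ (via $K(\alpha_Q)<0$), giving strict negativity on the positive orthant. So the paper's Lemma~\ref{lem:R(alpha_Q)} is precisely the computation you defer; your proposal should be regarded as an outline with its key step missing rather than a proof.
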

\begin{proof}

Let $(i,j,k) \not \in A$, so that $s_{ijk} = ai+bj+ck+\lambda > 0$. We compute : 
\begin{align*}
\Delta V_\alpha(i,j,k) &= \sum_{\ell=0}^{+\infty} \dfrac{s_{ijk}^\ell e^{-s_{ijk}}}{\ell !} V_\alpha(\ell,i,j) - V_\alpha(i,j,k) \\
&= \sum_{\ell=0}^{+\infty} \dfrac{s_{ijk}^\ell e^{-s_{ijk}}}{\ell !}\dfrac{\ell + \alpha i}{i + \alpha j + 1} - \dfrac{i + \alpha j}{j + \alpha k + 1} \\
&= \dfrac{(s_{ijk} + \alpha i)(j+\alpha k + 1) - (i+\alpha j)(i+\alpha j + 1)}{(i+\alpha j + 1)(j+\alpha k + 1)} \\
&= \dfrac{q(i,j,k) + L(i,j,k)}{(i+\alpha j + 1)(j+\alpha k + 1)},
\end{align*}
where $q$ is a quadratic form defined by
$$q(i,j,k) = - i^2 + (b-\alpha^2)j^2 + c\alpha k^2 + (a- \alpha)ij + \alpha(a+\alpha)ik + (c+b\alpha)jk,$$
and $L$ is a linear form.

Note that if we consider $(i,j,k)$ to be elements of $\R^3$, then $q$ is a real quadratic form in the sense of the definitions given in the first section of this paper. We will thus use the theoretical framework as if we worked on $\R^3$, but note that our space of interest, which is $\N^3$, is far smaller. 

\underline{\textbf{Step 1:} Choice of an appropriate $\alpha$}\\
Note that $q$ is a quadratic form of degree 3, having the following representation matrix in the canonical base : 
$$M_\alpha := \begin{pmatrix}
-1 & \dfrac{a-\alpha}{2} & \dfrac{\alpha(a+\alpha)}{2} \\
\dfrac{a- \alpha}{2} & b - \alpha^2 & \dfrac{c+b\alpha}{2} \\
\dfrac{\alpha(a+\alpha)}{2} & \dfrac{c+b\alpha}{2} & c\alpha \\
\end{pmatrix}.$$
Denote by $R(\alpha)$ the first minor of order 2 of $M_\alpha$, that is, 
\begin{equation}
    \label{def:R}
R(\alpha) = \begin{vmatrix}
-1 & \dfrac{a-\alpha}{2} \\
\dfrac{a- \alpha}{2} & b - \alpha^2
\end{vmatrix} =\frac{1}{4}( -a^2-4b+2a\alpha+3\alpha^2).
\end{equation}
%In order for $q$ to be a negative-definite quadratic form we would like to have $R(\alpha)>0$ and $\det M_\alpha<0$. Unfortunately, it seems does not seem to be possible to choose $\alpha \geq 0$ satisfying this. However, in view of the precedent remark, our strategy will consist here in the reduction of the size of the space. To do this, first note that the Gauss reduction Algorithm for $q$ yields :
The Gauss-Lagrange reduction algorithm for $q$ yields, assuming that $R(\alpha) \ne 0$:
\begin{equation}
q(i,j,k) = -\left( i+\dfrac{\alpha-a}{2}j-\dfrac{\alpha(a+\alpha)}{2}k\right)^2 - R(\alpha) \left( j - \dfrac{K(\alpha)}{2 R(\alpha)} k \right)^2 + \dfrac{\det M_\alpha}{R(\alpha)} k^2,
\label{GaussReduction}
\end{equation}
where 
\begin{equation}
    \label{def:K}
    K(\alpha) := c + \alpha b - \dfrac{\alpha(\alpha+a)(\alpha-a)}{2}.
\end{equation}
A straightforward but tedious calculation gives : 
$$\det M_\alpha =  \dfrac{1}{4} (\alpha^3+a\alpha^2-b\alpha+c)^2 = \dfrac{1}{4} Q(\alpha)^2 \ge 0,$$
where the polynomial $Q(X)$ is defined by 
$$Q(X) := X^3+aX^2-bX+c = -P(-X) .$$
Recall that we assumed that $\Disc(P) < 0$, so $P$ has a unique real root, hence $Q$ as well. Denote by $\alpha_Q$ this root. In particular, this gives that $\det M_{\alpha_Q} = 0$. Since the dominant coefficient of $Q$ is positive, and $Q(0) = c < 0$, we have that $\alpha_Q > 0.$\medskip

Let us now prove that with the choice of $\alpha=\alpha_Q$ the quadratic form $q$ is negative, which is equivalent to $R(\alpha_Q)>0$ thanks to the decomposition \eqref{GaussReduction}. 
\begin{lemma}
\label{lem:R(alpha_Q)}
Under the assumptions of Theorem \ref{thm:RecurrenceDisc}, if $\alpha_Q$ is the unique real root of $Q$, then $R(\alpha_Q)>0$. 
\end{lemma} 
The very computational proof of this lemma is postponed to Appendix \ref{app:technique}.\medskip

\underline{\textbf{Step 2:} The isotropic line of the quadratic form.} \\ 
From now on, we set $\alpha = \alpha_Q$, so that $\det M_{\alpha_Q} = 0$ and $R(\alpha_Q) > 0$ by Lemma~\ref{lem:R(alpha_Q)}. Equation \eqref{GaussReduction} shows that $q(\vec x) \le 0$ for all $\vec x\in \R^3$. We seek to determine the \emph{isotropic vectors} of $q$, i.e. the vectors $\vec x\in\R^3$ such that $q(\vec x) = 0$. By \eqref{GaussReduction}, we have 
\[
q(\vec x) = 0 \Leftrightarrow x\in \text{Vect}(\vec x^*),\text{ where }\vec x^* \coloneqq \left( \dfrac{\alpha_Q(a+\alpha_Q)}{2} + \dfrac{K(\alpha_Q)(a-\alpha_Q)}{4R(\alpha_Q)}, \dfrac{K(\alpha_Q)}{2R(\alpha_Q)}, 1 \right)^T.
\]
We claim that $\mathrm{Vect}(\vec x^*)\cap (\R_+)^3 = \{\boldsymbol 0\} = \{(0,0,0)\}$. For this, it is sufficient to prove that two of the coordinates of $\vec x^*$ have opposite signs. We will thus show that $\dfrac{K(\alpha_Q)}{2R(\alpha_Q)} < 0$.
Since $\alpha_Q$ satisfies $R(\alpha_Q)>0$, we have 
$$3 \alpha_Q^2 + 2a\alpha_Q -a^2-4b > 0,$$ 
so that
$$a^2+2b < 3\alpha_Q^2+2a\alpha_Q-2b,$$
and therefore,
\begin{align*}
2K(\alpha_Q)= -\alpha_Q^3 + (a^2+2b)\alpha_Q + 2c &< -\alpha_Q^3+(3\alpha_Q^2+2a\alpha_Q-2b)\alpha_Q+2c \\
&= 2(\alpha_Q^3+a\alpha_Q^2-b\alpha_Q+c) \\
&= 2 Q(\alpha_Q) = 0.
\end{align*}
This yields $\dfrac{K(\alpha_Q)}{2R(\alpha_Q)} < 0$ and thus $\mathrm{Vect}(\vec x^*)\cap (\R_+)^3 = \{\boldsymbol 0\}$.

\underline{\textbf{Step 3: a continuity argument}} 
Let us now prove that there is only a finite number of $(i,j,k) \in \N^3\setminus A$ such that $\Delta V_{\alpha_Q}(i,j,k) + \varepsilon V_{\alpha_Q}(i,j,k) > 0$, for $\varepsilon>0$ small enough.

We have
$$\Delta V_{\alpha_Q}(x,y,z) + \varepsilon V_{\alpha_Q}(x,y,z) = \dfrac{q(x,y,z) + \varepsilon \Tilde q(x,y,z) + L(x,y,z) + \varepsilon \Tilde L(x,y,z)}{(x+\alpha_Q y + 1)(y+\alpha_Q z + 1)},$$
where we recall that $\alpha_Q>0$ and where
$$\Tilde q(x,y,z) = x^2+\alpha_Q(\alpha_Q+1)y^2+ (2\alpha_Q + 1)xy + \alpha_Q xz + \alpha_Q^2yz$$
and $\tilde{L}$ is a linear functional in $(x,y,z)$.

Define 
$
S\coloneqq \R_+^3 \cap \{\vec x\in \R^3: \|\vec x\| = 1\}.
$
From Step 2, we deduce that $q(\vec x) < 0$ for all $\vec x\in S$. Since $S$ is compact and $q$ is continuous, it follows that there exists a positive constant $C$ such 
\[ \forall \vec s\in S: q(\vec x)\le -2C\]
We deduce that for a second constant $\tilde{C}$, for $\vec x\in S$,
\[q^\varepsilon(\vec x)=  q(\vec x) + \varepsilon \Tilde q(\vec x) \le -2C +\varepsilon \tilde{C}.\]
In what follows, we choose $\varepsilon$ small enough such that $q^\varepsilon(\vec x) \le -C$ for all $\vec x\in S$.
Note that for any $(x,y,z)\in(\R_+)^3$, by homogeneity of $q^\varepsilon$,
 \begin{align*}
     \Delta V_{\alpha_Q}(x,y,z) + \varepsilon V_{\alpha_Q}(x,y,z) %\le q^\varepsilon(x,y,z)+ \dfrac{ L(x,y,z) + \varepsilon \Tilde L(x,y,z)}{(x+\alpha_Q y + 1)(y+\alpha_Q z + 1)}
     \le \frac{-C \|(x,y,z)\|_2^2 + L(x,y,z) + \varepsilon \Tilde L(x,y,z)}{(x+\alpha_Q y + 1)(y+\alpha_Q z + 1)},
 \end{align*}
Since $L$ and $\tilde L$ are linear functionals, there are only a finite number of states $(i,j,k)$ satisfying $(i,j,k) \in \N^3\setminus A$ such that $\Delta V_{\alpha_Q}(i,j,k) + \varepsilon V_{\alpha_Q}(i,j,k) > 0$. Let us denote by $B$ this set.

Now for every $(i,j,k) \in A$, we have
\begin{align*}
\Delta V_{\alpha_Q}(i,j,k) + \varepsilon V_{\alpha_Q}(i,j,k) &\leq \E_{(0,i,j)}[V_{\alpha_Q}(X_1)] \\
&= V_{\alpha_Q}(0,i,j) \\
&= \dfrac{\alpha i}{i + \alpha j + 1} + 1 \\
&\leq 2 \vee (\alpha + 1) =: \Tilde K < +\infty
\end{align*}

We can now conclude that the drift condition $D(V_{\alpha_Q},\varepsilon, K, A \cup B)$ is satisfied. This finishes the proof of Lemma~\ref{VisLyapunov}.
\end{proof}

By Lemma \ref{AisSmall}, the set A is a small set. Since $C$ is a finite set, we have that $A \cup C$ is a small set
too. This concludes the proof of Theorem \ref{thm:RecurrenceDisc}, using Lemma~\ref{VisLyapunov} and Proposition~\ref{prop:Lyapounov}.

\subsubsection*{Remark.}
In the case where $b \in (0,1)$ and $\Disc(P)<0$, the function $V$ defined by \eqref{functionV} remains a Lyapunov function. However, in this situations, the set $A$ is no longer small. For a deeper discussion of this case, please refer to Section \ref{Conjecture}.

\section{Proofs of Transience cases}
In this section, we prove Proposition \ref{thm:transience}. To accomplish this, we will systematically employ a methodology analogous to our previous approach (see section 4 in \cite{Costa_Maillard_Muraro_2024}), using the following lemma adapted for the present three-parameter process :

\begin{lemma}
\label{lem:strategy_transience}
Let $S_1, S_2,\ldots$ be a sequence of subsets of $\N^3$ and $0<m_1< m_2<\dots$ an increasing sequence of integers. Suppose that
\begin{enumerate}
    \item On the event $\bigcap_{n\ge 1} \{X_{m_n} \in S_n\}$, we have $X_n \ne (0,0,0)$ for all $n\ge 1$,
    \item $\P_{(0,0,0)}(X_{m_1}\in S_1) > 0$ and for all $n\ge 1$ and every $x\in S_n$, we have $\P_x(X_{m_{n+1}-m_n} \in S_{n+1}) > 0$.
    \item There exist $(p_n)_{n\ge 1}$ taking values in $[0,1]$ and such that $\sum_{n\ge 1} (1-p_n) < \infty$, such that
    \[
    \forall n\ge 1: \forall x\in S_n: \P_x(X_{m_{n+1}-m_n} \in S_{n+1}) \ge p_n.
    \]
\end{enumerate}
Then the Markov chain $(X_n)_{n\ge0}$ is transient.
\end{lemma}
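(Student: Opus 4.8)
The plan is to reduce the claim to a positivity estimate for the probability that the chain, started from the accessible atom $\mathbf 0 := (0,0,0)$, never returns to $\mathbf 0$. Since $(X_n)_{n\ge0}$ is irreducible and $\mathbf 0$ is an accessible atom (Section~\ref{subsec:setting}), standard Markov chain theory~\cite{dmps_markov_chains} ensures that the chain is transient as soon as $\P_{\mathbf 0}\!\left(X_n\ne \mathbf 0\ \text{for all}\ n\ge1\right) > 0$. By hypothesis~(1), the event $E := \bigcap_{n\ge1}\{X_{m_n}\in S_n\}$ is contained in $\{X_n\ne\mathbf 0\ \text{for all}\ n\ge1\}$, so it is enough to show $\P_{\mathbf 0}(E) > 0$.

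I would begin with the elementary observation that a sequence $(p_n)_{n\ge1}$ with values in $[0,1]$ and $\sum_{n\ge1}(1-p_n)<\infty$ satisfies $p_n\to1$; hence there is an index $n_0\ge1$ with $p_n\ge\tfrac12$ for every $n\ge n_0$, and then $\prod_{n\ge n_0}p_n \ge e^{-2\sum_{n\ge n_0}(1-p_n)} > 0$, using $-\log(1-t)\le 2t$ on $[0,\tfrac12]$. Next comes the core induction, via the Markov property applied at the deterministic times $m_n$. Writing $A_n := \{X_{m_n}\in S_n\}$, so that $E=\bigcap_{n\ge1}A_n$, one iterates hypothesis~(3) along $S_n,S_{n+1},\dots$ to get, for every $m>n$ and every $x\in S_n$,
\[
\P_x\Big(\bigcap_{j=n+1}^{m}\{X_{m_j-m_n}\in S_j\}\Big)\;\ge\;\prod_{j=n}^{m-1}p_j ,
\]
and letting $m\to\infty$, $\P_x\big(\bigcap_{j>n}\{X_{m_j-m_n}\in S_j\}\big)\ge\prod_{j\ge n}p_j$. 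Taking $n=n_0$, conditioning on $X_{m_{n_0}}$ (which lies in $S_{n_0}$ on the event $A_1\cap\cdots\cap A_{n_0}$), the Markov property yields
\[
\P_{\mathbf 0}(E)\;=\;\E_{\mathbf 0}\!\left[\mathbf 1_{A_1\cap\cdots\cap A_{n_0}}\,\P_{X_{m_{n_0}}}\!\Big(\bigcap_{j>n_0}\{X_{m_j-m_{n_0}}\in S_j\}\Big)\right]\;\ge\;\P_{\mathbf 0}(A_1\cap\cdots\cap A_{n_0})\prod_{j\ge n_0}p_j .
\]

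It then remains to check $\P_{\mathbf 0}(A_1\cap\cdots\cap A_{n_0})>0$, which follows from a finite induction using the \emph{qualitative} hypothesis~(2): one has $\P_{\mathbf 0}(A_1)>0$, and if $\P_{\mathbf 0}(A_1\cap\cdots\cap A_n)>0$, then since $\P_{\mathbf 0}(A_1\cap\cdots\cap A_{n+1}) = \E_{\mathbf 0}\big[\mathbf 1_{A_1\cap\cdots\cap A_n}\,\P_{X_{m_n}}(X_{m_{n+1}-m_n}\in S_{n+1})\big]$ and the integrand is strictly positive on $A_1\cap\cdots\cap A_n$ (a set of positive probability), the left-hand side is positive as well. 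Combining the two displays gives $\P_{\mathbf 0}(E)>0$, hence transience of $(X_n)_{n\ge0}$.

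This proof is essentially careful bookkeeping, and the one subtlety worth highlighting is that hypothesis~(3) does not exclude $p_n=0$ for finitely many (small) $n$, so one cannot directly bound $\P_{\mathbf 0}(E)$ from below by $\P_{\mathbf 0}(A_1)\prod_{n\ge1}p_n$. Splitting the argument at the index $n_0$ is exactly what lets the \emph{quantitative} hypothesis~(3) control the infinite tail of the product while the \emph{qualitative} hypothesis~(2) handles the positivity of the first $n_0$ factors. Because the Markov property is invoked only at deterministic times $m_n$, no stopping-time considerations are needed.
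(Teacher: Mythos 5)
Your proposal is correct and follows essentially the same route as the paper's proof: reduce transience to the positive probability of never returning to the accessible atom $(0,0,0)$, use hypothesis (1) to replace that event by $\bigcap_n\{X_{m_n}\in S_n\}$, split at an index $n_0$ beyond which $\prod_{n\ge n_0}p_n>0$, control the tail by iterating hypothesis (3) via the Markov property and the initial finite segment by hypothesis (2). Your version merely spells out the details (the explicit bound on the infinite product, the induction for the finite segment) that the paper states without elaboration.
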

\begin{proof}
Since $(0,0,0)$ is an accessible state, it is enough to show that
\[
\P_{(0,0,0)}(X_n \ne (0,0,0)\,,\forall n\ge 1) > 0.
\]
Using assumption 1, it is sufficient to prove that
\begin{equation}
\label{eq:strategy_transience_1}
\P_{(0,0,0)}(X_{m_n} \in S_n\,,\forall n\ge 1) > 0.
\end{equation}
By assumption 3, there exists $n_0\ge 1$ such that $\prod_{n\ge n_0} p_n > 0$. It follows that for every $x\in S_{n_0}$,
\[
\P_x(X_{m_n-m_{n_0}}\in S_{n}\,,\forall n>n_0) \ge \prod_{n\ge n_0} p_n > 0.
\]
Furthermore, by assumption 2, we have that
\[
\P_{(0,0,0)}(\forall n\le n_0\,, X_{m_n}\in S_n) > 0.
\]
Combining the last two inequalities yields \eqref{eq:strategy_transience_1} and finishes the proof.
\end{proof}

\subsection[\texorpdfstring{Case $a,b<0, c>1$}{Case a,b<0, c>1}]{Case $a,b<0, c>1$}
\label{SectionTransience1}

In this region of parameters, the Markov chain eventually reaches one of the axes (see Figure \ref{fig:transience1}).

\begin{figure}[!h]
    \begin{minipage}[c]{0.46\linewidth}
        \centering
        \includegraphics[scale=0.25]{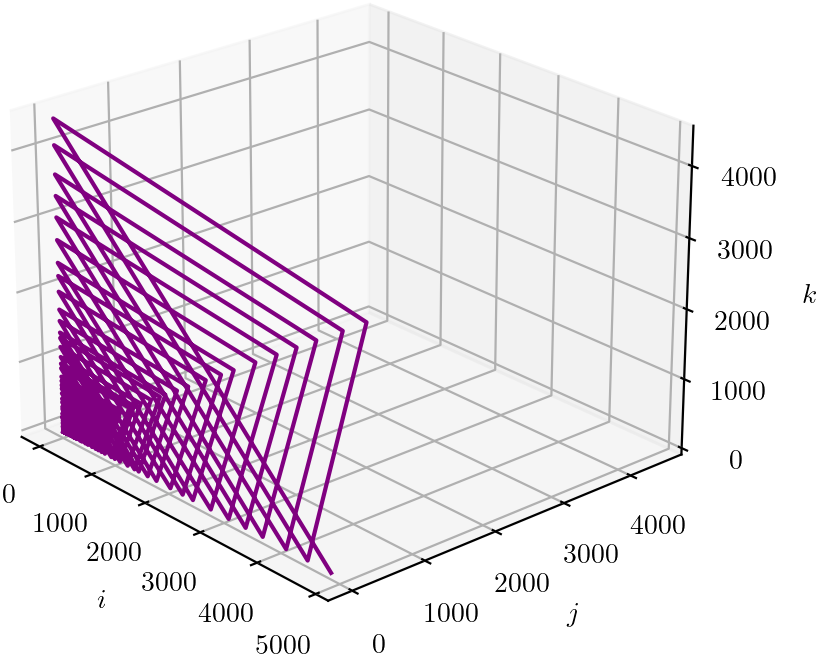}
    \end{minipage}
    \hfill%
    \begin{minipage}[c]{0.46\linewidth}
        \centering
        \includegraphics[scale=0.25]{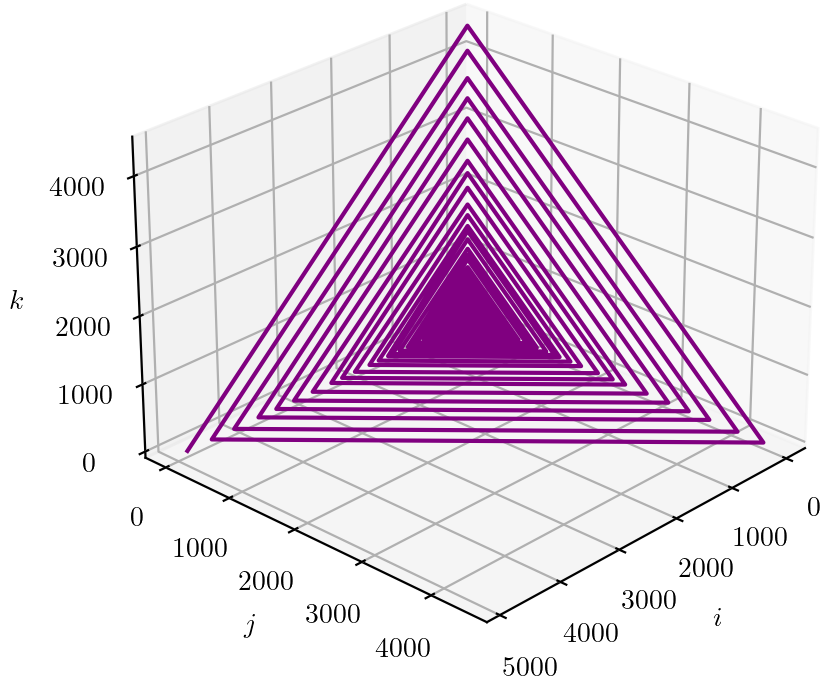}
    \end{minipage}
    \caption{Typical trajectory for the case $a,b<0, c>1$. Here, the parameters are $a = -1, b = -1, c = 1.1$.}
    \label{fig:transience1}
\end{figure}
Indeed, given that both $a$ and $b$ are negative, when the Markov chain $(X_n)$ reaches a state $(i,0,0)$ with $i \geq \max \left\{ -\frac{\lambda}{a}, -\frac{\lambda}{b} \right\}$, the fact that $s_{i00} = (ai+\lambda)_+ = 0$ and $s_{0i0} = (bi+\lambda)_+ = 0$, implies that the subsequent two steps of the Markov chain will lead to states $(0,i,0)$ and $(0,0,i)$. Afterwards, the Markov chain will hit the state $\left( \mathcal{P}(ci+\lambda),0,0 \right)$, where $ci+\lambda > i$. 

Consequently, focusing on the $i$ axis as an illustration, starting from $(k,0,0)$ with a sufficiently large $k$, the Markov chain $(X_n)$ is highly likely to return to a state $(k',0,0)$ where $k'$ satisfies $k' > k$, along the $i$ axis, within three steps.

To formalize these observations, it is natural to consider the Markov chain generated by the third power of the transition matrix, denoted as $P^3$, namely $(X_{3n+1})_{n\geq 0}$. 

For $i \geq \max \left\{ -\frac{\lambda}{a}, -\frac{\lambda}{b} \right\},$ we have $s_{i00}= s_{0i0} = 0$ and thus  : 
\begin{equation}
\label{poisson law}
\begin{aligned}
\mathbb{P}\left( X_{3n+3} = (k,0,0) ~\big| ~ X_{3n} = (i,0,0) , i \geq \max \left\{ -\frac{\lambda}{a}, -\frac{\lambda}{b} \right\} \right) &= \dfrac{e^{-s_{00i}} s_{00i}^k}{k!} \\
&= \dfrac{e^{-(ci+\lambda)} (ci+\lambda)^k}{k!}.
\end{aligned}
\end{equation}

Equation \eqref{poisson law} means that if $\Tilde X_{3n} \geq \max \left\{ -\frac{\lambda}{a}, -\frac{\lambda}{b} \right\}$, and $\Tilde X_{3n-1} = \Tilde X_{3n-2} =0$, then $\Tilde X_{3n+1} = \Tilde X_{3n+2} = 0$, and $\Tilde X_{3n+3}$ is a Poisson random variable with parameter $c\Tilde X_{3n} + \lambda$.

We will now prove our statement.

\begin{proof}[Proof of the transience of $(X_n)$ when $a,b<0$ and $c>1$]

Fix $r \in (1,c)$. We wish to apply lemma \ref{lem:strategy_transience} with 
$$m_n = 3n-1, \quad n \geq 1$$
and 
$$S_n = \{(i,0,0) \in \N^3 : i \geq r^n\}.$$
Assumption (1) holds immediately since if $X_{3n-1} = (i,0,0) \in S_n$, then $X_{3n} = (j,i,0)$ and $X_{3n+1} = (k,j,i)$ for some $j,k \in \N$, hence $(X_{3n-1},X_{3n},X_{3n+1}) \ne (0,0,0)$.

We now verify that the second assumption holds.
For states $x,y\in \N^3$, write $x\to_1 y$ if $\P_x(X_1 = y) > 0$.  Furthermore, for $S\subset \N^3$, write $x\to_1 S$ if $x\to_1 y$ for some $y\in S$. Note that $(0,0,0) \to_1 (i,0,0)$ for every $i\in\N$, so that $(0,0,0)\to_1 S_1$. Now, for every $i\in \N$, we have $(i,0,0)\to_1 (0,i,0)$, then, because $b<0$, $(0,i,0)\to_1 (0,0,i)$ and then since $c>0$, $(0,0,i)\to_1 (j,0,0)$ for every $j\in\N$. In particular, from every $x\in S_n$, we can indeed reach $S_{n+1}$ in three steps. Hence, the second assumption is verified as well.

We now prove the third assumption. We claim that there exists $n_0 \in\N$, such that the following holds:
\begin{equation}
\label{eq:toshow1}
    \forall n\geq n_0, \forall x\in S_n: \P_x(X_3\in S_{n+1}) \ge 1 - \dfrac{c}{(c-r)^2r^{n}}.
\end{equation}
To prove \eqref{eq:toshow1}, first note that according to the earlier remark on \eqref{poisson law}, if $n_0$ is chosen such that $r^{n_0} \geq \max \left\{ -\frac{\lambda}{a}, -\frac{\lambda}{b} \right\}$, then starting from a state $(x_0,0,0)$ with $x_0 \geq r^{n_0}$, we have $\Tilde{X}_1 = \Tilde X_2=0$ almost surely and $\Tilde{X}_{3}\sim \mathcal{P}(c\Tilde X_0 + \lambda)$. Therefore, if $n\ge n_0$ and $x_0 \ge r^n \ge r^{n_0}$,
\begin{align*}
1 - \P_{(x_0,0,0)}(\Tilde X_3 \ge r^{n+1}, \Tilde X_1 = \Tilde X_2 = 0) 
&= \P_{(x_0,0,0)}(\Tilde X_3 < r^{n+1})\\
&\le \proba{\mathcal{P}(cx_0 + \lambda)<r^{n+1}}\\
&\leq \proba{\mathcal{P}(cr^{n}) <r^{n+1}} \\
&= \proba{\mathcal{P}(cr^{n})-cr^{n} <r^{n}(r-c)} \\
&=\proba{|\mathcal{P}(cr^{n})-cr^{n}| > r^{n}(c-r)} \\
&\leq \dfrac{c}{(c-r)^2r^{n}},
\end{align*}
by the Bienaymé-Chebychev inequality. This proves \eqref{eq:toshow1}, which implies,
$$\forall x\in S_n: \P_x(X_3\in S_{n+1}) \ge p_n \coloneqq \left(1 - \dfrac{c}{(c-r)^2r^{n}}\right)_+,$$
and
$$\sum_{n\ge 1} (1-p_n) \le \sum_{n\ge1} \dfrac{c}{(c-r)^2r^{n}} < \infty.$$
This proves that the third assumption of Lemma~\ref{lem:strategy_transience} holds which shows that the Markov chain is transient.
\end{proof}

\subsection[\texorpdfstring{Case $b>1, ab+c < 0$}{Case b>1, ab+c<0}]{Case $b>1, ab+c < 0$}
\label{SectionTransienceb}

\begin{proof}[Proof of the transience of $(X_n)$ when $b>1$ and $ab+c<0$]

We will prove the result using exactly the same method as in the previous case, that is, we will employ Lemma 4, with modifications to the definitions of $m_n$ and $S_n$ to accommodate the current case of interest. Before going into the proof details, let us provide some intuition regarding the behaviour of $(X_n)$ in this case. 
\begin{figure}[!h]
    \begin{minipage}[c]{0.46\linewidth}
        \centering
        \includegraphics[scale=0.25]{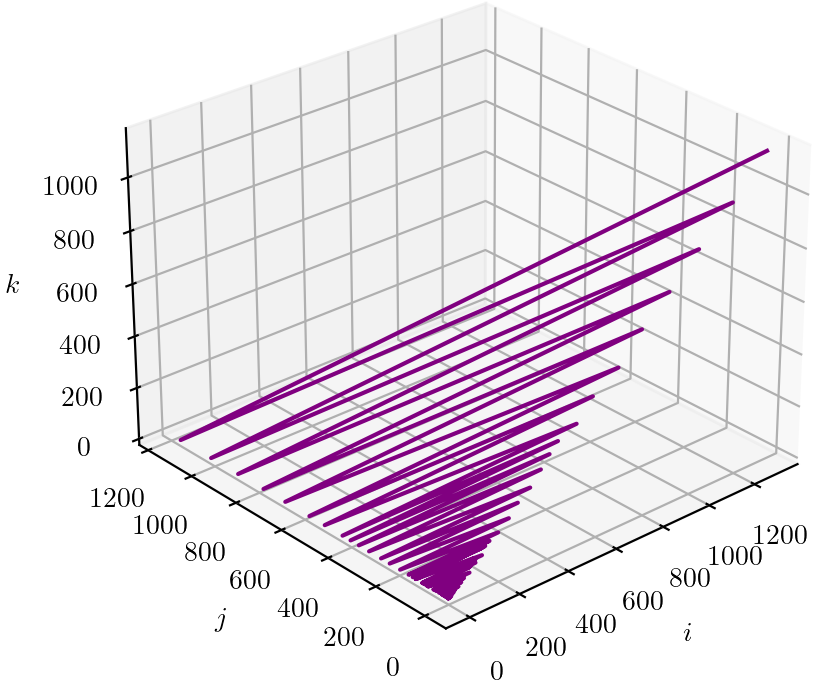}
    \end{minipage}
    \hfill%
    \begin{minipage}[c]{0.46\linewidth}
        \centering
        \includegraphics[scale=0.25]{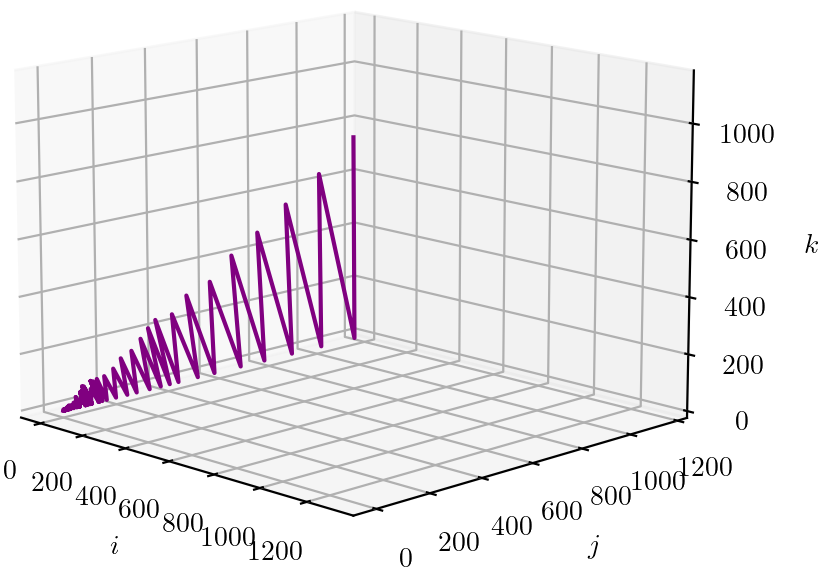}
    \end{minipage}
    \caption{Typical trajectory of $(X_n)$ for the second case of Proposition \ref{thm:transience}. Here, $a=-1, b=1.1, c=0.5$.}
    \label{fig:transience2}
\end{figure}

Let's assume we start from a state $X_0 = (0,x,0)$ where $x>0$. We would then expect, on average, the next step of $(X_n)$ to be $X_1 = (bx,0,x)$. Continuing the reasoning : the next would, on average, be $X_2 = \big((ab+c)x,bx,0\big)$. Now since $ab+c<0$, we end up with $X_2 = (0,bx,0)$ (recall that we take the positive part of the Poisson random variable in the definition of $(X_n)$). To conclude, as $b>1$, on average the Markov chain moves in two steps from a state $(0,x,0)$ to a state $(0,y,0)$ with $y>x$. See Figure \ref{fig:transience2}.

Let us now formalize this proof outline. For this purpose, let $r \in (1,b)$ such that $$ab+c+b-r < 0,$$ and let us denote by $\varepsilon$ the quantity : 
$$\varepsilon := b-r > 0.$$ 
We also define :
$$\beta := \dfrac{-a-a\lambda}{ab+c+\varepsilon}.$$
As announced, we will use Lemma \ref{lem:strategy_transience} with 
$$m_n := 2n-1, \quad n \geq 1$$
and 
$$S_n := \{(0,i,0) \in \N^3 : i \geq r^n\}.$$
For the same reasons as in the previous case, Assumption (1) holds : indeed, if $X_{2n-1} = (0,i,0) \in S_n$, then $X_{2n} = (j,0,i)$ for some $j$, hence $X_{2n-1},X_{2n} \ne (0,0,0)$.

For the second assumption of lemma \ref{lem:strategy_transience}, note that, for every $i,j\in \N$, since $b>0$, we have $(0,i,0)\to_1 (j,0,i)$. Then, because the probability for a Poisson random variable to be zero is positive, we have $(j,0,i)\to_1 (0,j,0)$ for every $j\in\N$. Particularly, for every $x \in S_n$, we can indeed reach $S_{n+1}$ in two steps. Therefore, the second assumption is also satisfied.

We now establish the third assumption, asserting that there exists $n_0 \in \N$ such that the following holds :
\begin{align}
\label{eq:toshow2}
    \forall n\geq n_0, \forall x\in S_n: \P_x(X_2\in S_{n+1}) \ge 1 - \dfrac{b+\lambda/r^n}{(b-r)^2r^n}.
\end{align}
To establish \eqref{eq:toshow2}, fix $n_0$ such that for all $n\geq n_0, r^n \geq \beta$. To begin with, observe that starting from $(0,i,0) \in S_n$, we have $\Tilde X_1 \sim \mathcal{P}(bi+\lambda)$. Subsequently, according to Bienaymé-Chebychev inequality :
$$\proba{\left| \mathcal{P}(bi+\lambda)-(bi+\lambda) \right| > (b-r)i} \leq \dfrac{bi+\lambda}{(b-r)^2i^2} \leq \dfrac{b+\lambda/r^n}{(b-r)^2r^n}.$$
This means that with high probability, 
$$\Tilde X_1 \in [bi+\lambda \pm (b-r)i].$$
Furthermore, knowing $\Tilde X_1$ we have : 
$$\Tilde X_2 \sim \mathcal{P}\left((a\Tilde X_1 + ci + \lambda)_+\right).$$ 
However, since $i \geq \beta$, with high probability :
$$a\Tilde X_1 + ci + \lambda \leq a(bi+\lambda + (b-r)i) + ci + \lambda = (ab+c+\varepsilon) i + a\lambda + a \leq 0,$$
and : 
$$\Tilde X_1 \geq bi+\lambda-(b-r)i = ri + \lambda > ri \geq r^{n+1}.$$
We deduce that
$$1 - \P_{(0,i,0)}(\Tilde X_1 \geq r^{n+1}, \Tilde X_2 = 0) \leq \dfrac{b+\lambda/r^n}{(b-r)^2r^n}.$$
Now, \eqref{eq:toshow2} implies,
\[
\forall x\in S_n: \P_x(X_3\in S_{n+1}) \ge p_n \coloneqq \left(1 - \dfrac{b+\lambda/r^n}{(b-r)^2r^n}\right)_+,
\]
and
\[
\sum_{n\ge 1} (1-p_n) \le \sum_{n\ge1} \dfrac{b+\lambda/r^n}{(b-r)^2r^n} < \infty.
\]
This establishes that the third assumption of Lemma~\ref{lem:strategy_transience} is satisfied. Consequently, the lemma implies that the Markov chain is transient, which concludes the proof.
\end{proof}

\appendix

\section[\texorpdfstring{Proof of transience when $a_1, \dots, a_p \geq 0$ and $a_1 + \dots + a_p > 1$.}{Proof of transience when a_1, ... , a_p >= 0 and a_1 + ... + a_p > 1.}]{Proof of transience when $a_1, \dots, a_p \geq 0$ and $a_1 + \dots + a_p > 1$.}
\label{app:Trans_p}

\begin{prop}
Let $p\geq 2$ and $a_1, \dots, a_p \geq 0$ such that $a_1 + \dots + a_p > 1$. Recall that for $n \geq 0$, $$\Tilde X_{n+1} \sim \mathcal{P}(a_1 \Tilde X_n + a_2 \Tilde X_{n-1} + \dots + a_p \Tilde X_{n-p+1}).$$
Then, the Markov chain $X_n := (\Tilde X_n, \dots, \Tilde X_{n-p+1})$ is transient.
\label{prop_Trans_p}
\end{prop}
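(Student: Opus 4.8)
The plan is to deduce transience by applying (the evident $\N^p$-analogue of) Lemma~\ref{lem:strategy_transience}: its statement and proof carry over verbatim, since $\boldsymbol 0 := (0,\dots,0)$ is an accessible atom of $(X_n)$ in $\N^p$ as well. I would use it with the times $m_n = n$ and the sets
\[
S_n := \{x \in \N^p : W(x) \ge r^n\},
\]
where $\rho > 1$ is a root of $X^p - a_1X^{p-1} - \dots - a_p$ (constructed below), $1 < r < \rho$, and $W : \N^p \to [0,\infty)$ is a nonnegative linear form adapted to the left Perron eigenvector. With this choice condition~(1) of the lemma is immediate: on $\bigcap_{n\ge 1}\{X_n \in S_n\}$ one has $W(X_n) \ge r^n > 0$, hence $X_n \ne \boldsymbol 0$, for every $n \ge 1$.

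First I would construct $W$. Since $a_1,\dots,a_p \ge 0$ and $a_1 + \dots + a_p > 1$, the polynomial $X^p - a_1X^{p-1} - \dots - a_p$ equals $1 - (a_1 + \dots + a_p) < 0$ at $X = 1$ and tends to $+\infty$, so it has a root $\rho > 1$; fix one. Put $\beta_i := \sum_{j=i}^p a_j\,\rho^{\,i-1-j}$ for $1 \le i \le p$. Dividing $\rho^p = a_1\rho^{p-1} + \dots + a_p$ by $\rho^p$ shows $\beta_1 = 1$, and clearly $\beta_i \ge 0$, with $\beta_j > 0$ as soon as $a_j > 0$; moreover a short computation gives $\rho\beta_i = a_i\beta_1 + \beta_{i+1}$ for $1 \le i \le p-1$ and $\rho\beta_p = a_p\beta_1$, i.e.\ $(\beta_i)$ is a left eigenvector of the companion matrix for the eigenvalue $\rho$. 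Set $W(x) := \beta_1 x_1 + \dots + \beta_p x_p$. Because all parameters and all coordinates are nonnegative, the positive part in the definition of the process is inactive, so conditionally on $X_0 = x$, $\Tilde X_1$ is Poisson with parameter $s_x := a_1x_1 + \dots + a_px_p + \lambda > 0$; using the eigenvector relations one obtains
\[
\E_x[W(X_1)] = \rho\,W(x) + \lambda \ge \rho\,W(x), \qquad \operatorname{Var}_x(W(X_1)) = \beta_1^2\,s_x = s_x \le C\,W(x) + \lambda,
\]
the last inequality because $a_j x_j \le (a_j/\beta_j)\,W(x)$ for each $j$ with $a_j > 0$.

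Next I would verify conditions~(2) and~(3) of the lemma. For~(2): from $\boldsymbol 0$ one reaches $(\ell,0,\dots,0)$ with positive probability for every $\ell \in \N$, and $W((\ell,0,\dots,0)) = \ell$ can be taken $\ge r$; and from any $x \in \N^p$ one has $s_x \ge \lambda > 0$, so $\Tilde X_1$ has full support on $\N$ and $W(X_1) = \Tilde X_1 + \beta_2 x_1 + \dots + \beta_p x_{p-1}$ can be made $\ge r^{n+1}$. For~(3), fix $n_0$ with $r^{n_0} \ge 1$; for $n \ge n_0$ and $x \in S_n$ one has $W(x) \ge r^n$, hence $r^{n+1} \le r\,W(x) \le (r/\rho)\,\E_x[W(X_1)]$, so the Bienaymé--Chebyshev inequality gives
\[
\P_x\big(W(X_1) < r^{n+1}\big) \le \P_x\big(\E_x[W(X_1)] - W(X_1) > (1 - r/\rho)\,\E_x[W(X_1)]\big) \le \frac{\operatorname{Var}_x(W(X_1))}{(1 - r/\rho)^2\,\E_x[W(X_1)]^2},
\]
and the right-hand side is at most $\big(C\,W(x) + \lambda\big)\big/\big((1-r/\rho)^2\rho^2\,W(x)^2\big) \le C'/r^n$, using $W(x) \ge r^n \ge 1$. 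Since $r > 1$, setting $p_n := (1 - C'/r^n)_+$ for $n \ge n_0$ and $p_n := 0$ otherwise yields $\sum_{n\ge 1}(1-p_n) < \infty$ and $\P_x(X_1 \in S_{n+1}) \ge p_n$ for all $x \in S_n$, which is condition~(3). The lemma then gives $\P_{\boldsymbol 0}(X_n \ne \boldsymbol 0\ \forall n\ge 1) > 0$, and since $\boldsymbol 0$ is accessible from every state, $(X_n)$ is transient.

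The main obstacle is the uniform deviation estimate in~(3): the set $S_n$ contains states with $W(x)$ arbitrarily large, so $\operatorname{Var}_x(W(X_1)) \asymp W(x)$ is unbounded on $S_n$; the resolution is that $\E_x[W(X_1)] \asymp \rho\,W(x)$ is quadratically larger, so the Poisson fluctuations are relatively negligible and the resulting bound $\asymp 1/W(x) \le r^{-n}$ is summable in $n$. A secondary point requiring care is choosing the eigenvector so that $\beta_1 = 1$ (to control $W$ from below by $\Tilde X_1$ right after leaving $\boldsymbol 0$) and $\beta_j > 0$ whenever $a_j > 0$ (so that $s_x$ is genuinely dominated by $W(x)$). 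Conceptually this is just the supercritical branching picture behind the INGARCH process: the cluster generated by a single immigrant is a Galton--Watson tree with offspring mean $a_1 + \dots + a_p > 1$, which survives with positive probability, but the argument above keeps everything elementary.
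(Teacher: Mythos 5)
Your proposal is correct and follows essentially the same route as the paper's proof: both apply Lemma~\ref{lem:strategy_transience} with $m_n=n$, geometric thresholds $r^n$ for some $1<r<$ (the real root $>1$ of $X^p-a_1X^{p-1}-\cdots-a_p$), and a Bienaym\'e--Chebyshev bound on the single Poisson step to get summable failure probabilities. The only difference is in the choice of the sets $S_n$ — the paper uses coordinatewise boxes $\{i_k\ge r^{\,n-k+1}\}$, whereas you use superlevel sets of the left Perron eigenform $W$, a cosmetic variant that changes nothing essential.
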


\begin{proof}
Let us define the polynomial 
$$P(X) = X^p - a_1X^{p-1} - \dots a_{p-1}X - a_p.$$
Since $\displaystyle \lim_{x\to +\infty} P(x) = +\infty$, and $P(1) = 1 - a_1 - \dots - a_p < 0$, there exist $\theta > 1$ and $r\in (1,\theta)$ such that $P(\theta) = 0$, and
\begin{equation}
r^p - a_1 r^{p-1} - \dots - a_{p-1}r-a_p < 0.
\label{property_r}
\end{equation}
For the rest of the proof, fix $r$ such that \eqref{property_r} holds.

For $n \geq p$, let us define $m_n = n$ and the sets 
$$S_n = \left\{ (i_1, \dots, i_p) \in \N^p ~|~ i_1 \geq r^n, i_2 \geq r^{n-1}, \dots, i_p \geq r^{n-p+1} \right\}.$$
Note that since $a_1, \dots, a_p \geq 0$, assumption 1 and 2 are automatically satisfied. 

For assumption 3, let $n \geq p$ and $\mathcal{I} := (i_1, \dots, i_p) \in S_n$. Then, 
\begin{align*}
\P_\mathcal{I}(\Tilde X_1 < r^{n+1}) &= \proba{\mathcal{P}(a_1 i_1 + \dots a_p i_p + \lambda) < r^{n+1}} \\
&\leq \proba{\mathcal{P}(a_1 r^n + \dots a_p r^{n-p+1}) < r^{n+1}} \\
&= \mathbb{P}\Big[\mathcal{P}(a_1 r^n + \dots a_p r^{n-p+1})-(a_1 r^n + \dots a_p r^{n-p+1})\\
&\qquad\qquad < r^{n-p+1}(r^p - a_1 r^{p-1} - \dots - a_{p-1}r-a_p)\Big].
\end{align*}
From \eqref{property_r} and Bienaymé-Chebychev inequality, we deduce that : 
\begin{align*}
\P_\mathcal{I}(\Tilde X_1 < r^{n+1}) 
&\leq \mathbb{P}\Bigg[\Big| \mathcal{P}(a_1 r^n + \dots a_p r^{n-p+1})-(a_1 r^n + \dots a_p r^{n-p+1}) \Big| 
\\
&\qquad\qquad>r^{n-p+1}|r^p - a_1 r^{p-1} - \dots - a_{p-1}r-a_p|\Bigg] \\
&\leq \dfrac{(a_1 + \dots + a_p)r^{2(p-1)}}{r^n(r^p - a_1 r^{p-1} - \dots - a_{p-1}r-a_p)^2}
\end{align*}
Hence, assumption 3 of Lemma \ref{lem:strategy_transience} is satisfied by considering a sequence $(p_n)$ defined as :
$$p_n := \left( 1 - \dfrac{(a_1 + \dots + a_p)r^{2(p-1)}}{r^n(r^p - a_1 r^{p-1} - \dots - a_{p-1}r-a_p)^2} \right)_+.$$
This concludes the proof of Proposition \ref{prop_Trans_p}.
\end{proof}

\section{Proof of technical lemmas}
\label{app:technique}
%
%\begin{lemma}
%\label{lem_discP}
%Consider $a>0$ and $b\in\R$ then 
%\begin{itemize}
%    \item if $a^2+3b<0$, then $\text{Disc}(P)<0$ for all values of $c\in\R$.
%    \item if $a^2+3b\ge 0$ then  $\text{Disc}(P)<0 \Leftrightarrow c\in ]-\infty, c_-[ \cup ]c_+,\infty[$ where
%    $$c_-= \frac{-1}{27}(2a^3+9ab+2 (a^2+3b)^{3/2})\qquad c_+= \frac{-1}{27}(2a^3+9ab-2 (a^2+3b)^{3/2})$$
%    Note that furthermore if $a^2+4b\ge0$, then $c_-<0<c_+$.
%\end{itemize}
%\end{lemma}

\subsection{Proof of Lemma \ref{lem_discP}}
Coming back to the definition of $\text{Disc}(P)$ in \eqref{eq:disc},  $\text{Disc}(P) < 0$ 
writes
$$a^2b^2+4b^3-4a^3c-18abc-27c^2<0
$$
Let us look at the l.h.s. as a polynomial $S$ of degree $2$ in $c$
$$S(c)=-27c^2 -c(4a^3+18ab) +a^2b^2+4b^3$$
Note that its discriminant
\begin{align*}
    D_S&= (4a^3+18ab)^2 + 4\times 27 b^2(a^2+4b)\\
    &=16 (a^2 +3b)^3
\end{align*}
If $D_S<0$, since the dominant coefficient of $S$ is negative, $S$ is negative for all $c$.

Otherwise, $S$ admits two real roots which are exactly $c_\pm$ given by 
$$c_\pm= \frac{1}{27}(-2a^3-9ab\pm 2 (a^2+3b)^{3/2}).$$

The last condition arises simply by considering that if $S(0)= b^2(a^2+4b)>0$, then the two eigenvalues have different signs.

\subsection{Proof of Lemma \ref{lem:R(alpha_Q)}}
Recall that $\alpha_Q$ is the unique real root of $Q$ and that since $Q(0)=c<0$, $\alpha_Q$ is positive.

We now prove that $R(\alpha_Q) > 0$ where $R$ is given in \eqref{def:R}. 
Note that $R$ is a polynomial in $\alpha$, of discriminant $16(a^2+3b)$.\\
\noindent\underline{If $a^2+3b<0$}, then $R$ is positive for all values of $\alpha$, in particular $R(\alpha_Q) > 0$. \\
\noindent\underline{If $a^2+3b\ge0$}, the roots of $R$ are 
$$\alpha_{\pm} = -\dfrac{a}{3} \pm \dfrac{2}{3} \sqrt{a^2+3b}.$$
Note that $\alpha_-$ is always negative while $\alpha_+\in[-a/3,a/3]$.
\begin{itemize}
    \item In the case where $\alpha_+<0$, then $R(\alpha)>0$ for all positive $\alpha$, and as a consequence $R(\alpha_Q)>0$.
    \item
Let us now assume $\alpha_+>0$. In particular, this implies that $R(0)<0$ which writes $a^2+4b>0$.\\
Computing carefully, we can derive that 
$$\alpha_+^2 = \frac{-a^2}{3}+\frac{4}{3}(a^2+3b) -\frac{4a}{9}\sqrt{a^2+3b}\,,$$
\begin{align*}
    \alpha_+^3 = \frac{8}{27}(a^2+3b)^{3/2} +\frac{2a^2}{9}\sqrt{a^2+3b} -\frac{a^3}{27} -\frac{4}{9}(a^2+3b)\,,
\end{align*}
and finally
\begin{align*}
Q(\alpha_+)&= \alpha_+^3+a\alpha_+^2-b\alpha_++c\\
%&=\left(-\dfrac{a}{3} + \dfrac{2}{3} \sqrt{a^2+3b}\right)^3 + a\left(-\dfrac{a}{3} + \dfrac{2}{3} \sqrt{a^2+3b}\right)^2-b\left(-\dfrac{a}{3} + \dfrac{2}{3} \sqrt{a^2+3b}\right)+c  \\
&= \dfrac{1}{27}\left( 2a^3+9ab+27c+2(a^2+3b)^{\frac{3}{2}} \right)\,.
\end{align*}
In the following, we prove that $Q(\alpha_+)<0$. Since $Q$ has a unique real root $\alpha_Q$, we will deduce that $\alpha_Q>\alpha_+$, and as a consequence $R(\alpha_Q)>0$. \\
In order to prove that $Q(\alpha_+)<0$, it is sufficient to prove that under our assumptions on $a$, $b$ and $c$
\begin{equation}
    \label{eq:c_1}
    c<\frac{-1}{27} (2a^3+9ab +2(a^2+3b)^{\frac{3}{2}})
\end{equation}
This derives directly from Lemma \ref{lem_discP} since we assumed $\Disc(P)<0, a^2+4b\ge 0$ and $c<0$, 
\end{itemize}

\begin{comment}
We now show that $R(\alpha_Q) > 0$. Note that $R$ is a polynomial in $\alpha$, of discriminant $16(a^2+3b)$. If $a^2+3b<0$, then $R$ is positive for all values of $\alpha$, in particular $R(\alpha_Q) > 0$. Hence, it remains to treat the case that $a^2+3b\geq 0$. In this case, the roots of $R$ are 
$$\alpha_{\pm} = -\dfrac{a}{3} \pm \dfrac{2}{3} \sqrt{a^2+3b}.$$
Note that
\begin{align*}
Q(\alpha_+)=
\left(-\dfrac{a}{3} + \dfrac{2}{3} \sqrt{a^2+3b}\right)^3 + a\left(-\dfrac{a}{3} + \dfrac{2}{3} \sqrt{a^2+3b}\right)^2-&b\left(-\dfrac{a}{3} + \dfrac{2}{3} \sqrt{a^2+3b}\right)+c  \\
&= \dfrac{1}{27}\left( 2a^3+9ab+27c+2(a^2+3b)^{\frac{3}{2}} \right)
\end{align*}
Since $\text{Disc}(P) < 0$ and $c<0$, we have that 
$$c < \dfrac{1}{27}(-2a^3-9ab)-\dfrac{2}{27}(a^2+3b)^\frac{3}{2}.$$ 
This yields $Q(\alpha_+)<0$, and since $Q$ has a unique real root $\alpha_Q$, we deduce that $\alpha_Q>\alpha_+$, and as a consequence $R(\alpha_Q)>0$.

We just proved that with the choice of $\alpha=\alpha_Q$, the quadratic form $q$ is degenerate, negative. However, the proof is not finished yet. 
\end{comment}

\subsection{Proof of Lemma \ref{Lemme ab+c}}

\begin{lemma}
\label{Lemme ab+c}
Let $b\in \R$, $a\ge0$ and $c<0$ such that $\Disc(P) < 0$. Then, $ab + c < 0$. 
\end{lemma}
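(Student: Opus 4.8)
The plan is to handle the trivial case $b\le 0$ first and then, for $b>0$, to combine the explicit description of the region $\{\Disc(P)<0\}$ from Lemma~\ref{lem_discP} with one elementary factorization.

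\textbf{Easy case.} If $b\le 0$, then $ab\le 0$ since $a\ge 0$, and as $c<0$ we get $ab+c<0$ immediately. So assume from now on that $b>0$. Then $a^2+3b\ge 3b>0$ and $a^2+4b\ge 4b>0$, so Lemma~\ref{lem_discP} applies: it yields $c_-<0<c_+$ together with
\[
\Disc(P)<0 \iff c\in(-\infty,c_-)\cup(c_+,\infty).
\]
Since $c<0<c_+$, the hypothesis $\Disc(P)<0$ forces $c<c_-$.

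\textbf{Reduction to a polynomial inequality.} It now suffices to prove that $c_-\le -ab$, for then $ab+c<ab+c_-\le 0$. Recalling $c_-=-\tfrac{1}{27}\bigl(2a^3+9ab+2(a^2+3b)^{3/2}\bigr)$, the inequality $c_-\le -ab$ is, after multiplying through by $-27$ and simplifying, equivalent to
\[
a^3+(a^2+3b)^{3/2}\ge 9ab.
\]
Introducing $s:=\sqrt{a^2+3b}$, so that $s\ge a\ge 0$ and $9ab=3a(s^2-a^2)=3as^2-3a^3$, this becomes $s^3-3as^2+4a^3\ge 0$. I would finish by noting that $s=-a$ is a root of the left-hand side, whence the factorization $s^3-3as^2+4a^3=(s+a)(s-2a)^2$, which is nonnegative because $s+a\ge 0$. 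This establishes $c_-\le -ab$ and hence the lemma.

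The only step requiring a small spark of insight is spotting the factorization $s^3-3as^2+4a^3=(s+a)(s-2a)^2$; once this is in hand, the rest is routine application of Lemma~\ref{lem_discP} and sign bookkeeping, so I do not foresee a genuine obstacle.
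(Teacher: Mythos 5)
Your proposal is correct and follows essentially the same route as the paper's proof: dispose of $b\le 0$ trivially, then for $b>0$ use Lemma~\ref{lem_discP} to deduce $c<c_-$, and reduce the claim to the inequality $a^3+(a^2+3b)^{3/2}\ge 9ab$, which is exactly the inequality $-a^3+9ab\le (a^2+3b)^{3/2}$ verified in the paper. The only (minor, harmless) difference is the closing algebra: you substitute $s=\sqrt{a^2+3b}$ and factor $s^3-3as^2+4a^3=(s+a)(s-2a)^2$, whereas the paper squares both sides and reduces to $-27(a^2-b)^2b<0$; your version even sidesteps the small sign caveat inherent in squaring.
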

\begin{proof}[Proof of Lemma \ref{Lemme ab+c}]
Let $a,b,c$ as in the statement of the lemma.
\hspace{1cm}
\begin{itemize}
\item If $b\le0$, since $c<0$ and $a\ge0$ the result is obvious. 

\item If $b>0$, then from Lemma \ref{lem_discP}, $\Disc(P)<0$ is equivalent to :
$$c < \dfrac{1}{27}(-2a^3-9ab)-\dfrac{2}{27}(a^2+3b)^\frac{3}{2}$$
which yields :
$$ab+c < \dfrac{1}{27}(-2a^3+18ab)-\dfrac{2}{27}(a^2+3b)^\frac{3}{2} = \dfrac{2}{27}\left[-a^3+9ab- (a^2+3b)^\frac{3}{2}\right]$$

It remains to prove that the r.h.s. is negative which can be written as \begin{align*}
-a^3 + 9ab < (a^2+3b)^{\frac{3}{2}} &\Longleftrightarrow (9ab-a^3)^2 < (a^2+3b)^3 \\
&\Longleftrightarrow -27a^4b+54a^2b^2-27b^3<0 \\
&\Longleftrightarrow -27(a^2-b)^2b < 0
\end{align*}
the last inequality being obviously true when $b>0$.
\end{itemize}
\end{proof}

\section*{Acknowledgements}

M.C. has been supported by the Chair ”Modélisation Mathématique et Biodiversité” of Veolia Environnement-École Polytechnique-Muséum national d’Histoire naturelle-Fondation X and by ANR project HAPPY (ANR-23-CE40-0007) and DEEV (ANR-20-CE40-0011-01). P.M. acknowledges partial support from ANR grant ANR-20-CE92-0010-01 and from Institut Universitaire de France.
\bibliographystyle{plain}

\end{document}